\documentclass[12pt,reqno]{article}

\usepackage[usenames]{color}
\usepackage{amssymb}
\usepackage{amsmath}
\usepackage{amsthm}
\usepackage{amsfonts}
\usepackage{amscd}
\usepackage{graphicx}

\usepackage[colorlinks=true,
linkcolor=webgreen,
filecolor=webbrown,
citecolor=webgreen]{hyperref}

\definecolor{webgreen}{rgb}{0,.5,0}
\definecolor{webbrown}{rgb}{.6,0,0}

\usepackage{color}
\usepackage{fullpage}
\usepackage{float}

\usepackage{graphics}
\usepackage{latexsym}
\usepackage{epsf}
\usepackage{breakurl}

\setlength{\textwidth}{6.5in}
\setlength{\oddsidemargin}{.1in}
\setlength{\evensidemargin}{.1in}
\setlength{\topmargin}{-.1in}
\setlength{\textheight}{8.4in}

\newcommand{\seqnum}[1]{\href{https://oeis.org/#1}{\rm \underline{#1}}}
\def\modd#1 #2{#1\ \mbox{\rm (mod}\ #2\mbox{\rm )}}

\begin{document}

\theoremstyle{plain}
\newtheorem{theorem}{Theorem}
\newtheorem{corollary}[theorem]{Corollary}
\newtheorem{lemma}[theorem]{Lemma}
\newtheorem{proposition}[theorem]{Proposition}
\newtheorem{question}[theorem]{Question}

\theoremstyle{definition}
\newtheorem{definition}[theorem]{Definition}
\newtheorem{example}[theorem]{Example}
\newtheorem{conjecture}[theorem]{Conjecture}

\theoremstyle{remark}
\newtheorem{remark}[theorem]{Remark}

\begin{center}
\vskip 1cm{\LARGE
Derivatives and Integrals of Polynomials Associated with Integer Partitions
}
\vskip 1cm
Madeline Locus Dawsey, Tyler Russell, and Dannie Urban\\
Department of Mathematics\\
University of Texas at Tyler\\
Tyler, TX 75799\\
USA\\
\href{mailto:mdawsey@uttyler.edu}{\tt mdawsey@uttyler.edu}  \\
\href{mailto:trussell12@patriots.uttyler.edu}{\tt trussell12@patriots.uttyler.edu}  \\
\href{mailto:durban@patriots.uttyler.edu}{\tt durban@patriots.uttyler.edu}  \\
\end{center}

\vskip 0.2in

\begin{abstract} 
Integer partitions express the different ways that a positive integer may be written as a sum of positive integers.  Here we explore the analytic properties of a new polynomial $f_\lambda(x)$ that we call the partition polynomial for the partition $\lambda$, with the aim to learn new properties of partitions.  We prove a recursive formula for the derivatives of $f_\lambda(x)$ involving Stirling numbers of the second kind, show that the set of integrals from 0 to 1 of a normalized version of $f_\lambda(x)$ is dense in $[0,1/2]$, pose a few open questions, and formulate a conjecture relating the integral to the length of the partition.  We also provide specific examples throughout to support our speculation that an in-depth analysis of partition polynomials could further strengthen our understanding of partitions.
\end{abstract}

\section{Introduction and statement of results}

A \emph{partition} $\lambda$ of a nonnegative integer $n$ is a non-increasing sequence $(\lambda_1,\lambda_2,\lambda_3,\dots)$ of positive integers, called the \emph{parts} of the partition, which sum to $n$.  The sum $n$ is called the \emph{size} of the partition.  By convention, the only partition of size $n=0$ is the empty partition with no parts.  Although partitions are classical objects and simple to define, they are also complex additive structures with elusive analytic properties and are still a significant topic of study in number theory today \cite{A,AE,HW}.  Hardy, Rademacher, Ramanujan, and many other well known number theorists of the twentieth century made major breakthroughs in our understanding of partitions via the generating function of $p(n)$, the \emph{partition function} \cite[\seqnum{A000041}]{OEIS}, which counts the number of partitions of size $n$.  These results include the famous Hardy-Ramanujan asymptotic formula which was proven analytically using the circle method \cite{HR}: $$p(n)\sim\frac{1}{4n\sqrt{3}}e^{\pi\sqrt{\frac{2n}{3}}}\hspace{.25cm}\text{as }n\rightarrow\infty;$$ and the Ramanujan congruences which can be proven using the theory of modular forms \cite{R1,R3}: $$p(5n+4)\equiv\modd{0} {5},\hspace{.2cm}p(7n+5)\equiv\modd{0} {7},\hspace{.2cm}\text{and}\hspace{.2cm}p(11n+6)\equiv\modd{0} {11}$$  for all integers $n\geq0$.  Rademacher's main contribution was an exact formula for $p(n)$ as an absolutely convergent infinite series involving Kloosterman sums and the $I$-Bessel function \cite{Rademacher}.  Most of the discoveries that have been made about partitions so far have resulted from studying $p(n)$.

Recent work by the first author, Just, and Schneider \cite{DJS} takes a different approach to studying partitions.  They define a map from the set of all partitions to the set of natural numbers called the \emph{supernorm} of a partition, previously called the Heinz number: \cite[\seqnum{A305078}]{OEIS}, which sheds more light on properties of partitions by relating them to the multiplicative structure of prime factorizations of integers.  For notational convenience, their work utilizes frequency notation $\lambda=\langle1^{m_1},2^{m_2},\dots,k^{m_k}\rangle$, where $m_i$ is the multiplicity of the part $i$ in the partition $\lambda$ and $k$ is the largest part of $\lambda$, as opposed to the traditional additive notation.  The \emph{supernorm} of a partition $\lambda=\langle1^{m_1},2^{m_2},\dots,k^{m_k}\rangle$ is defined by $$\widehat{N}(\lambda)=\prod_{i=1}^kp_i^{m_i},$$ where $p_i$ denotes the $i$th prime.  In an attempt to further understand the parts of a partition $\lambda$ and their multiplicities, the \emph{length} $\ell(\lambda)$ (the number of parts), the size $|\lambda|$, the \emph{norm} $N(\lambda)$ (the product of the parts), and the supernorm $\widehat{N}(\lambda)$, among other partition statistics, Just \cite{J} defined a polynomial to which one can apply calculus and analysis in order to analyze properties of partitions.  To define this new polynomial, let $\lambda=\langle1^{m_1},2^{m_2},\dots,k^{m_k}\rangle$ be a partition written in frequency notation.  We define the \emph{partition polynomial} $f_\lambda$ by 
\begin{equation}\label{def1}
f_\lambda(x)=\sum_{i=1}^km_ix^i.
\end{equation}

Differentiation and integration of the partition polynomial both have the potential to reveal certain properties of partitions.  The most basic observations one can make in the direction of differentiation are that $f_\lambda(1)=\ell(\lambda)$ and $f'_\lambda(1)=|\lambda|$.  It is natural to then ask what the higher derivatives of $f_\lambda$ tell us about $\lambda$ when we evaluate at $x=1$.  Does $f''_\lambda(1)$ give some generalization of the size or the length of $\lambda$?  We first determine what the higher derivatives look like in general, and then we can make some progress toward answering this question by calculating a few specific examples which lead to some open questions.

\begin{theorem}\label{derivativethm}
Given a partition $\lambda=\langle1^{m_1},2^{m_2},\dots,k^{m_k}\rangle$, the $d$th derivative of its partition polynomial is
\begin{equation}\label{derivativeformula}
f_{\lambda}^{(d)}(x)=\sum_{i=1}^{k}i^{d}m_ix^{i-d}-\sum_{j=0}^{d-1}{d\brace j}x^{j-d}f_{\lambda}^{(j)}(x)
\end{equation}
for all $0\leq d\leq k$, where ${d\brace j}$ is the Stirling number of the second kind. 
\end{theorem}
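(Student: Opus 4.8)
The plan is to sidestep the recursive appearance of \eqref{derivativeformula} by comparing both of its sides against the closed form of $f_\lambda^{(d)}$ produced by naive term-by-term differentiation; the asserted identity then collapses to a single classical fact about Stirling numbers of the second kind. Since $\frac{d^d}{dx^d}x^i = i(i-1)\cdots(i-d+1)\,x^{i-d}$, writing $i^{\underline{d}} := i(i-1)\cdots(i-d+1)$ for the falling factorial and differentiating \eqref{def1} directly gives
\begin{equation*}
f_\lambda^{(d)}(x) = \sum_{i=1}^k i^{\underline{d}}\, m_i\, x^{i-d}.
\end{equation*}
The hypothesis $0 \le d \le k$ records the range in which $f_\lambda$ has not yet been differentiated to zero (for $i < d$ the factor $i^{\underline{d}}$ already vanishes), and the same computation at order $j$ yields $x^{j-d} f_\lambda^{(j)}(x) = \sum_{i=1}^k i^{\underline{j}}\, m_i\, x^{i-d}$, so that every term on the right of \eqref{derivativeformula} carries the common power $x^{i-d}$.

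Next I would substitute these expressions into the right-hand side of \eqref{derivativeformula}; comparing, for each $i$, the coefficient of $m_i x^{i-d}$ on the two sides reduces the whole statement to the numerical identity
\begin{equation*}
i^d - \sum_{j=0}^{d-1} {d \brace j}\, i^{\underline{j}} = i^{\underline{d}}
\end{equation*}
holding for every integer $i$. Moving $i^{\underline{d}}$ across as the $j=d$ term, in which ${d \brace d}=1$, this is precisely the standard expansion of a monomial in the falling-factorial basis, $x^d = \sum_{j=0}^d {d \brace j}\, x^{\underline{j}}$, evaluated at $x=i$, which I would invoke as a defining property of the Stirling numbers of the second kind. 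Feeding it back matches the right-hand side of \eqref{derivativeformula} term by term with the closed form above, completing the proof. The only genuine content here is the Stirling identity; everything else is routine differentiation and reindexing, so there is no serious obstacle on this route.

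Should one prefer an argument mirroring the recursive shape of \eqref{derivativeformula} rather than quoting the closed-form identity, the result also follows by induction on $d$. Here I would differentiate the order-$d$ instance of \eqref{derivativeformula}, apply the product rule to each $x^{j-d}f_\lambda^{(j)}(x)$, reindex the shifted derivatives $f_\lambda^{(j+1)}$, and rewrite the leftover power-sum term using the inductive hypothesis; collecting the coefficient of each $x^{j-(d+1)}f_\lambda^{(j)}(x)$ and applying the Stirling recurrence ${d+1 \brace j} = j{d \brace j} + {d \brace j-1}$ then reproduces the order-$(d+1)$ instance. The main obstacle on this path is purely organizational: one must check that the two contributions to each coefficient, one from differentiating the factor $x^{j-d}$ and one from the shifted derivative, fuse into exactly the right Stirling number, with the boundary indices $j=0$ and $j=d$ needing separate verification.
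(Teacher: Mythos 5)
Your primary argument is correct, and it takes a genuinely different route from the paper. The paper proves \eqref{derivativeformula} by induction on $d$: it differentiates the order-$N$ instance, performs a fairly lengthy regrouping of four sums, and closes the induction with the Stirling recurrence ${N+1\brace j}=j{N\brace j}+{N\brace j-1}$ --- exactly the alternative you sketch in your final paragraph. Your main proof instead writes $f_\lambda^{(d)}(x)=\sum_{i=1}^k i^{\underline{d}}m_i x^{i-d}$ in closed form, observes that every term of the right-hand side of \eqref{derivativeformula} carries the common factor $m_i x^{i-d}$, and reduces the whole theorem to the classical basis-change identity $i^d=\sum_{j=0}^{d}{d\brace j}\,i^{\underline{j}}$, using ${d\brace d}=1$ to absorb the falling factorial as the $j=d$ term. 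This is sound: the identity holds for every integer $i$, so the termwise comparison is immediate, and the boundary cases ($d=0$ with an empty second sum, and $i<d$ where $i^{\underline{d}}=0$) check out. What your route buys is brevity and transparency --- it makes clear that the Stirling numbers of the second kind appear precisely because they convert powers into falling factorials, i.e., because repeated differentiation of $x^i$ produces $i^{\underline{d}}$ rather than $i^d$; the cost is that you must quote the expansion $x^d=\sum_j{d\brace j}x^{\underline{j}}$ as known (it is standard and is in Stanley, which the paper already cites for the recurrence). The paper's inductive route needs only the recurrence but pays for it with a page of index bookkeeping.
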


The $d=0$ derivative $f_\lambda^{(0)}$ denotes the partition polynomial itself.  Note that the second sum is empty when $d=0$ and that $f_\lambda^{(d)}(x)=0$ for all $d>k$.  For background on Stirling numbers of the second kind, see \cite{Stanley} and \cite[\seqnum{A008277}]{OEIS}, for example.  We prove Theorem \ref{derivativethm} in Section \ref{proofofderivativethm} and provide further discussion, examples, and some open questions about derivatives of partition polynomials in Section \ref{derivativequestions}.

In addition to studying derivatives, one may also attempt to understand partitions further by studying integrals.  Before taking integrals of partition polynomials, we first normalize them and restrict their domain as follows in order to more easily compare partitions.  The \emph{normalized partition polynomial} $\hat{f}_\lambda : [0, 1] \to \mathbb{R}$ of the partition $\lambda=\langle1^{m_1},2^{m_2},\dots,k^{m_k}\rangle$ is defined by
		\begin{equation}\label{def2}
		\hat{f}_\lambda(x) = \frac{1}{\ell(\lambda)} \sum_{i=1}^{k} m_i x^i.
		\end{equation}
		
	\begin{figure}[h]
		\centering
		\includegraphics[scale=0.55]{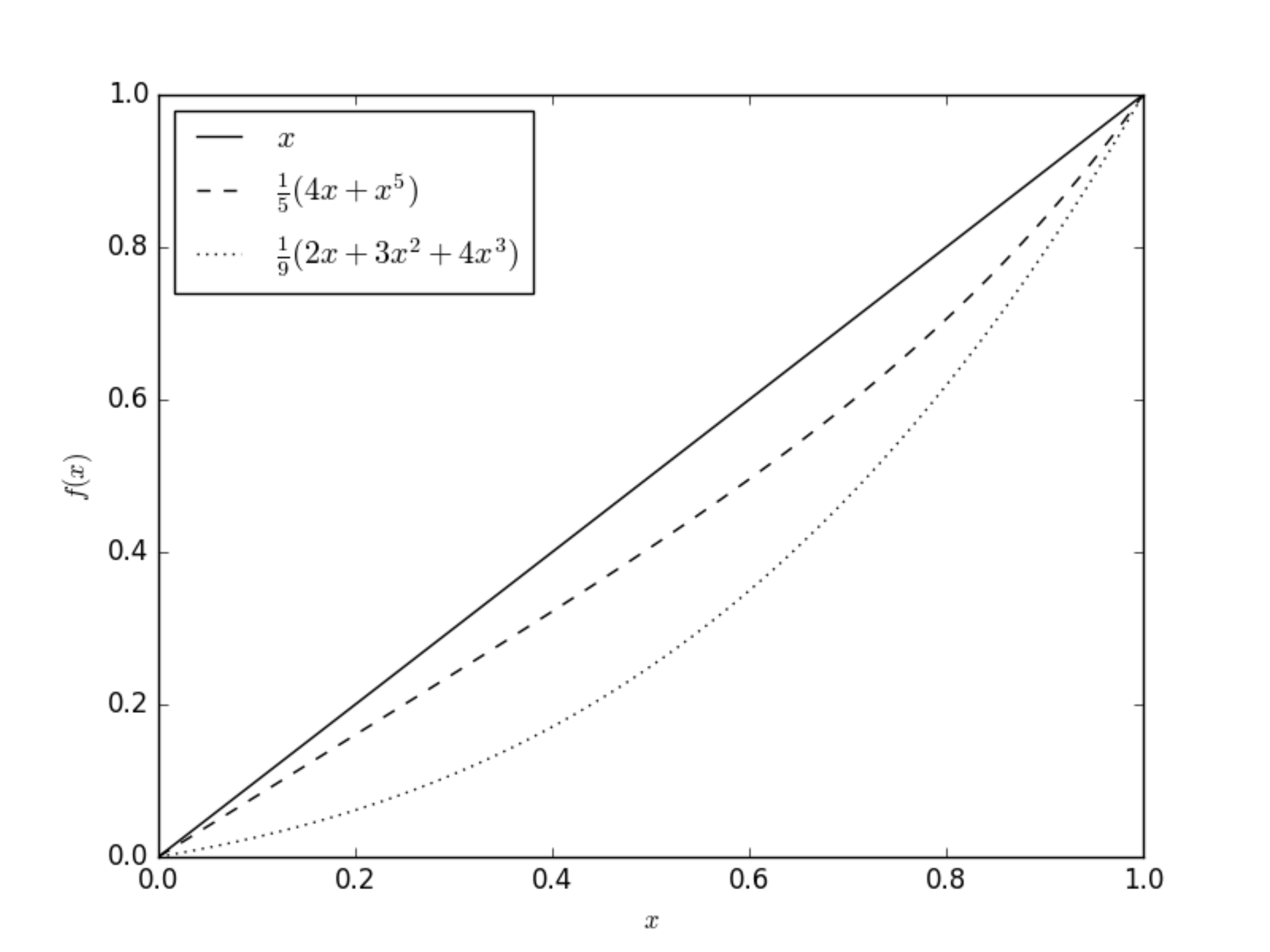}
		\caption{Some normalized partition polynomials.}
		\label{figure1}
	\end{figure}
	
	It is straightforward to see that the integral of a normalized partition polynomial must have a value between 0 and 1/2 as illustrated in Figure \ref{figure1} (see Section \ref{section_integrals} for more details).  Knowing this, it is natural to ask which values in the interval $[0, 1/2]$ can be the integral of a normalized partition polynomial.  The following theorem shows that one can find normalized partition polynomials whose integrals take any value in the interval $[0, 1/2]$.
	
	\begin{theorem}
		\label{main_result}
		The integrals of normalized partition polynomials are dense in $[0,1/2]$.
	\end{theorem}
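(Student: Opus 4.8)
The plan is to reduce the statement to an elementary fact about weighted averages. First I would compute the integral directly from the definition \eqref{def2}: since $\int_0^1 x^i\,dx = 1/(i+1)$ and $\ell(\lambda) = \sum_i m_i$, we have
$$\int_0^1 \hat{f}_\lambda(x)\,dx = \frac{1}{\ell(\lambda)}\sum_{i=1}^k \frac{m_i}{i+1} = \frac{\sum_{i=1}^k m_i/(i+1)}{\sum_{i=1}^k m_i}.$$
Thus the integral is exactly a weighted average of the numbers $1/(i+1)$, $i\ge 1$, with nonnegative integer weights $m_i$ (not all zero). Since each $1/(i+1)$ lies in $(0,1/2]$ with maximum $1/2$ attained at $i=1$, every such integral lies in $(0,1/2]$, which reconfirms the range asserted before the theorem. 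Density will follow once I show these weighted averages come arbitrarily close to every point of $[0,1/2]$.

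The key construction uses only two distinct parts. For an integer $k\ge 2$ and nonnegative integers $a,b$ (not both zero), consider $\lambda=\langle 1^a,k^b\rangle$, whose integral is
$$\frac{a/2 + b/(k+1)}{a+b} = \frac{r}{2} + (1-r)\cdot\frac{1}{k+1}, \qquad r := \frac{a}{a+b}\in[0,1].$$
The parameter $r$ supplies a rational weight that I can tune with the multiplicities, while the large part $k$ supplies a term I can drive to $0$. These are the two degrees of freedom I need: one to interpolate and one to reach the accumulation point at $0$.

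To finish, fix a target $t\in[0,1/2]$ and $\epsilon>0$. Because $2t\in[0,1]$ and the rationals are dense, I can choose a rational $r\in[0,1]$ with $|r/2 - t| < \epsilon/2$; writing $r=p/q$ in lowest terms and setting $a=p$, $b=q-p$ realizes $r=a/(a+b)$ with nonnegative integers. I then choose $k$ large enough that $1/(k+1)<\epsilon/2$, so the second term above is at most $(1-r)/(k+1)<\epsilon/2$. The triangle inequality gives $\left|\int_0^1 \hat{f}_\lambda(x)\,dx - t\right| < \epsilon$, completing the argument.

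I do not expect a serious obstacle; the proof is elementary once the integral is recognized as a weighted average. The one point requiring care is that density forces us to approximate $0$, which is the infimum but not itself a value of $1/(i+1)$; this is precisely why the construction must employ arbitrarily large parts rather than any fixed finite set of part sizes. A quick separate check of the boundary cases $t=0$ (take $r=0$, i.e.\ $\lambda=\langle k^b\rangle$) and $t=1/2$ (take $\lambda=\langle 1\rangle$, giving integral exactly $1/2$) rounds out the proof.
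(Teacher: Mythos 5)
Your proof is correct, and it takes a genuinely different and more elementary route than the paper. The paper first constructs two explicit sequences of partitions whose integrals tend to $0$ and to $1/2$ (Lemmas \ref{lemma_edge_50} and \ref{lemma_edge_0}), and then runs a bisection argument: using the sum operation $\oplus$ and the fact that for equal-length partitions the integral of $\hat{f}_{\lambda\oplus\gamma}$ is the average of the two integrals (Corollary \ref{cor2}), it builds a sequence $\delta_s=\alpha_s\oplus\beta_s$ whose integrals trap any target $c$ in intervals of length $(b-a)/2^s$. Your argument instead observes that the integral is the weighted average $\bigl(\sum_i m_i/(i+1)\bigr)/\bigl(\sum_i m_i\bigr)$ and that the two-part-size family $\langle 1^a,k^b\rangle$ already realizes $\frac{r}{2}+(1-r)\frac{1}{k+1}$ for every rational $r=a/(a+b)\in[0,1]$ and every $k$; choosing $r$ close to $2t$ and $k$ large finishes the job in one step, with a completely explicit partition for each target and tolerance. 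What the paper's longer route buys is the machinery of Section \ref{sec_operations} (the $\oplus$ operation and its averaging property), which it reuses later for the $\mathrm{Avg}(n,\ell)$ results; what your route buys is brevity, explicitness, and no reliance on a limiting sequence of combined partitions. Your handling of the endpoint $t=0$ (approachable only via arbitrarily large parts, never attained) and $t=1/2$ (attained by $\langle 1\rangle$) is also correct and consistent with Propositions \ref{prop2.3} and \ref{inequal}.
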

	
	We provide more basic results on integrals of normalized partition polynomials in Section \ref{section_integrals}, and we prove Theorem \ref{main_result} in Section \ref{sec_integralproofs}.  The remaining question is what the value of the integral tells us about the partition, and this question is still open, for the most part.
	
	\begin{question}\label{question3}
	Is there a combinatorial interpretation of ${\displaystyle\int_0^1}\hat{f}_\lambda(x)\,dx$?
	\end{question}
	
	Question \ref{question3} leads to many other related open questions.  For example, it is natural to study integrals of all normalized partition polynomials for partitions of fixed size $n$, but it is still an open problem to obtain a readily accessible combinatorial interpretation of these integrals.  This turns out to be difficult, but one could obtain results about partitions of large size $n$ which rely on asymptotic results.   It would be desirable to prove a result for all partition sizes $n$ that does not rely on asymptotics.
	
	One could also ask whether certain types of partitions of fixed size $n$ generally yield larger or smaller integrals.  Some computation indicates that although fewer parts do not always correspond to smaller integrals, this trend may hold on average.

\begin{conjecture}\label{conj}
    Let $n \in \mathbb{N}$, and let $\mathrm{Avg}(n, \ell)$ denote the average of all integrals of normalized partition polynomials over all partitions of size $n$ into $\ell$ parts. Then we expect that
    \begin{equation*}
        \mathrm{Avg}(n, 1) \leq \mathrm{Avg}(n, 2) \leq \cdots \leq \mathrm{Avg}(n, n) .
    \end{equation*}
\end{conjecture}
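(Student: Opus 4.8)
The plan is to reduce the integral to a closed form and then recast the conjecture as a statement about the distribution of part sizes. Since $\int_0^1 x^i\,dx=1/(i+1)$, a partition $\lambda$ with $\ell=\ell(\lambda)$ parts $\lambda_1\ge\cdots\ge\lambda_\ell$ satisfies
\[
I(\lambda):=\int_0^1\hat f_\lambda(x)\,dx=\frac{1}{\ell}\sum_{i=1}^{k}\frac{m_i}{i+1}=\frac{1}{\ell}\sum_{r=1}^{\ell}\frac{1}{\lambda_r+1}.
\]
Write $P(n,\ell)$ for the set of partitions of $n$ into exactly $\ell$ parts, $p(n,\ell)=|P(n,\ell)|$, and $M(n,\ell,i)=\sum_{\lambda\in P(n,\ell)}m_i(\lambda)$ for the total number of parts equal to $i$. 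Interchanging the order of summation gives
\[
\mathrm{Avg}(n,\ell)=\frac{1}{p(n,\ell)}\sum_{\lambda\in P(n,\ell)}I(\lambda)=\sum_{i\ge1}q(n,\ell,i)\,g(i),\qquad q(n,\ell,i):=\frac{M(n,\ell,i)}{\ell\,p(n,\ell)},\quad g(i):=\frac{1}{i+1}.
\]
Since $\sum_i M(n,\ell,i)=\ell\,p(n,\ell)$, the numbers $q(n,\ell,\cdot)$ form a probability distribution, so $\mathrm{Avg}(n,\ell)$ is precisely the expectation of the strictly decreasing function $g$ evaluated at the value $V_{n,\ell}$ of a uniformly random part of a uniformly random partition of $n$ into $\ell$ parts. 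As a sanity check, the chain's endpoints $\mathrm{Avg}(n,1)=1/(n+1)$ and $\mathrm{Avg}(n,n)=1/2$ fall out immediately.

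Because $g$ is decreasing, the inequality $\mathrm{Avg}(n,\ell)\le\mathrm{Avg}(n,\ell+1)$ would follow at once from the structural claim that parts become stochastically smaller as $\ell$ grows, namely that $\Pr[V_{n,\ell+1}>t]\le\Pr[V_{n,\ell}>t]$ for every threshold $t$. So the second step is to prove this stochastic domination. Deleting $j$ copies of $i$ gives the bijection $\{\lambda\in P(n,\ell):m_i(\lambda)\ge j\}\leftrightarrow P(n-ij,\ell-j)$, and summing $m_i=\sum_{j\ge1}\mathbf 1[m_i\ge j]$ yields $M(n,\ell,i)=\sum_{j\ge1}p(n-ij,\ell-j)$. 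Substituting this, the domination reduces to the purely enumerative inequality
\[
\frac{1}{(\ell+1)\,p(n,\ell+1)}\sum_{i>t}\sum_{j\ge1}p(n-ij,\ell+1-j)\ \le\ \frac{1}{\ell\,p(n,\ell)}\sum_{i>t}\sum_{j\ge1}p(n-ij,\ell-j),
\]
to be established for all $n$, $\ell$, and $t$. Direct computation confirms both this inequality and the resulting monotonicity of the averages for small $n$.

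The hard part is proving the domination in general, since the comparison between $P(n,\ell)$ and $P(n,\ell+1)$ is genuinely delicate. I would attempt a size-increasing coupling: a map $\Phi\colon P(n,\ell+1)\to P(n,\ell)$ that merges two parts, so that every part weakly grows and the number of parts drops by one, together with a careful count of its fibers. The main obstacle is that any such merging map is many-to-one and fails to push the uniform measure on $P(n,\ell+1)$ forward to the uniform measure on $P(n,\ell)$; simultaneously controlling the fiber sizes and the normalizing ratio $p(n,\ell+1)/p(n,\ell)$ is exactly where the difficulty concentrates.

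As a fallback I would try to prove the cross-multiplied form $(\ell+1)\,p(n,\ell+1)\,S(n,\ell)\le\ell\,p(n,\ell)\,S(n,\ell+1)$, where $S(n,\ell):=\ell\,p(n,\ell)\,\mathrm{Avg}(n,\ell)=\sum_{\lambda\in P(n,\ell)}\sum_{r}(\lambda_r+1)^{-1}$, by induction on $n$ using $p(n,\ell)=p(n-1,\ell-1)+p(n-\ell,\ell)$ and a companion recurrence for $S$. Splitting $P(n,\ell)$ according to whether the smallest part equals $1$ produces $S(n,\ell)=S(n-1,\ell-1)+\tfrac12 p(n-1,\ell-1)+S^{(2)}(n-\ell,\ell)$, where the ``all parts $\ge2$'' branch shifts each denominator from $\lambda_r+1$ to $\lambda_r+2$. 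Thus the recursion does not close within a single sequence: one is forced to track the whole family $S^{(c)}(n,\ell)=\sum_{\lambda\in P(n,\ell)}\sum_r(\lambda_r+c)^{-1}$, and proving the required comparisons uniformly in the shift parameter $c$ is the remaining challenge on this route.
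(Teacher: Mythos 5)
You have not proved the statement, and neither does the paper: this is Conjecture~\ref{conj}, which the paper leaves open, proving only the first inequality $\mathrm{Avg}(n,1)\leq\mathrm{Avg}(n,2)$ for all $n$ (Theorem~\ref{avg1}, by explicitly enumerating $P(n,1)$ and $P(n,2)$ and comparing $\frac{1}{n+1}$ with a harmonic-number expression) and the second inequality only asymptotically (Theorem~\ref{avg2}, by lower-bounding the number of parts of size $i$ among all partitions of $n$ into $3$ parts by roughly $\frac{n-i}{2}-1$ and invoking the Knessl--Keller asymptotic $p(n,3)\sim n^2/12$). Your preparatory reductions are all correct and worth keeping: the closed form $\mathrm{Avg}(n,\ell)=\mathbb{E}\left[\frac{1}{V_{n,\ell}+1}\right]$ with $q(n,\ell,\cdot)$ a genuine probability distribution (since $\sum_i M(n,\ell,i)=\ell\,p(n,\ell)$), the exact bijective identity $M(n,\ell,i)=\sum_{j\geq1}p(n-ij,\ell-j)$, the endpoint values $\frac{1}{n+1}$ and $\frac{1}{2}$, and the recurrence $S(n,\ell)=S(n-1,\ell-1)+\frac{1}{2}p(n-1,\ell-1)+S^{(2)}(n-\ell,\ell)$ all check out. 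Your framework is also genuinely sharper than the paper's partial machinery: the paper's count of parts of size $i$ in the proof of Theorem~\ref{avg2} is a crude lower bound for your exact $M(n,3,i)$, so your identity would likely give a cleaner route to both of the paper's theorems.

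The genuine gap is exactly where you flag it: the stochastic domination $\Pr[V_{n,\ell+1}>t]\leq\Pr[V_{n,\ell}>t]$ for all $t$ is the entire content of the problem, and neither of your two routes is carried out. The merging-map coupling stalls on fiber control (any map $P(n,\ell+1)\to P(n,\ell)$ that merges parts is many-to-one with wildly varying fiber sizes, and the ratio $p(n,\ell+1)/p(n,\ell)$ is itself not well understood at this level of precision), and the recurrence route does not close, forcing the shifted family $S^{(c)}$ with no evident uniform-in-$c$ comparison. Two further cautions: first, stochastic domination is strictly stronger than what the conjecture asserts --- you only need monotonicity of the expectation of the one decreasing function $g(i)=\frac{1}{i+1}$, so insisting on domination may be proving more than necessary, and a weaker single-crossing or correlation-type inequality tailored to this $g$ might be more tractable; second, the paper's authors explicitly suggest that further progress is likely only asymptotic via part-count estimates, which is consistent with your diagnosis that the exact combinatorial comparison between $P(n,\ell)$ and $P(n,\ell+1)$ is the bottleneck. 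In summary: a correct and illuminating reformulation, strictly beyond the paper's own partial results in structure, but a reduction plus two research programs rather than a proof.
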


Numerical computation confirms that Conjecture \ref{conj} holds for $n \leq 50$.  We partially prove the conjecture by proving the first inequality $\mathrm{Avg}(n, 1) \leq \mathrm{Avg}(n, 2)$ for all $n$ and the second inequality $\mathrm{Avg}(n, 2) \leq \mathrm{Avg}(n, 3)$ in an asymptotic sense for large values of $n$.

\begin{theorem}\label{avg1}
    Let $n \in \mathbb{N}$. Then $\mathrm{Avg}(n, 1) \leq \mathrm{Avg}(n, 2)$.
\end{theorem}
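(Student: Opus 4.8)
The plan is to compute both averages explicitly and then reduce the inequality to a termwise comparison. First I would record the general integral formula: integrating \eqref{def2} term by term gives
\begin{equation*}
\int_0^1 \hat{f}_\lambda(x)\,dx = \frac{1}{\ell(\lambda)}\sum_{i=1}^{k} \frac{m_i}{i+1}.
\end{equation*}
The only partition of $n$ into a single part is $(n)$, for which $\ell(\lambda)=1$ and $m_n=1$, so its integral is $\tfrac{1}{n+1}$. Since this is the only term, $\mathrm{Avg}(n,1)=\tfrac{1}{n+1}$.

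Next I would parametrize the partitions of $n$ into two parts as $(n-b,\,b)$ for $1\le b\le \lfloor n/2\rfloor$, of which there are $\lfloor n/2\rfloor$ (so we take $n\ge 2$ for the average to be defined). Each such partition has length $2$, and whether or not the two parts are equal, the integral formula above yields
\begin{equation*}
\int_0^1 \hat{f}_{(n-b,\,b)}(x)\,dx = \frac{1}{2}\left(\frac{1}{(n-b)+1} + \frac{1}{b+1}\right);
\end{equation*}
in particular, in the equal-parts case $b=n/2$ (when $n$ is even) the multiplicity $m_{n/2}=2$ is correctly absorbed by this symmetric expression. Thus $\mathrm{Avg}(n,2)$ is the average of these $\lfloor n/2\rfloor$ quantities.

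The key observation is that each individual two-part integral already dominates $\mathrm{Avg}(n,1)$, so no count of the partitions is needed. Writing $a=n-b$, I would use the identity
\begin{equation*}
\frac{1}{a+1} + \frac{1}{b+1} = \frac{(a+1)+(b+1)}{(a+1)(b+1)} = \frac{n+2}{(a+1)(b+1)},
\end{equation*}
together with the AM--GM bound $(a+1)(b+1)\le \left(\tfrac{(a+1)+(b+1)}{2}\right)^2 = \tfrac{(n+2)^2}{4}$ (equivalently, the convexity of $t\mapsto 1/(t+1)$). This gives
\begin{equation*}
\int_0^1 \hat{f}_{(a,\,b)}(x)\,dx = \frac{n+2}{2(a+1)(b+1)} \ \ge\ \frac{2}{n+2} \ \ge\ \frac{1}{n+1},
\end{equation*}
where the final inequality is equivalent to $n\ge 0$ and hence always holds. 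Since every term being averaged to form $\mathrm{Avg}(n,2)$ is at least $\tfrac{1}{n+1}=\mathrm{Avg}(n,1)$, their average satisfies $\mathrm{Avg}(n,2)\ge \mathrm{Avg}(n,1)$, as desired.

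There is no substantial obstacle in this argument; the only points requiring care are the bookkeeping in the equal-parts case for even $n$ (handled by the symmetric formula above) and the realization that the termwise bound makes the averaging step trivial, so that the $\lfloor n/2\rfloor$ count never enters. It is worth noting that the same convexity idea does not immediately extend to comparing $\mathrm{Avg}(n,2)$ with $\mathrm{Avg}(n,3)$, which is presumably why the second inequality of Conjecture \ref{conj} is only established asymptotically.
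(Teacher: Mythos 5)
Your proof is correct, but it takes a genuinely different route from the paper's. The paper computes $\mathrm{Avg}(n,2)$ in aggregate: it forms the single combined partition $\lambda_2$ obtained by $\oplus$-summing all two-part partitions of $n$ (splitting into odd and even cases), expresses the integral as $\frac{1}{2\lfloor n/2\rfloor}(H_n-1)$ up to the even-$n$ correction, and then uses $H_n-1\geq 1$ to conclude for $n\geq 4$, checking $n<4$ by hand. You instead prove the stronger termwise statement that \emph{every} two-part partition $(a,b)$ of $n$ satisfies
\begin{equation*}
\frac{1}{2}\left(\frac{1}{a+1}+\frac{1}{b+1}\right)=\frac{n+2}{2(a+1)(b+1)}\geq\frac{2}{n+2}\geq\frac{1}{n+1}=\mathrm{Avg}(n,1),
\end{equation*}
via AM--GM (equivalently, convexity of $t\mapsto 1/(t+1)$), so the average dominates trivially. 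Your argument is cleaner in several respects: it is uniform in $n$ (no small-case check), needs no odd/even split beyond noting that the symmetric formula absorbs $m_{n/2}=2$, and never invokes harmonic numbers or the count $\lfloor n/2\rfloor$. What the paper's aggregate approach buys is continuity with its surrounding machinery (Proposition \ref{addition_formula} and the combined-partition viewpoint) and, more importantly, a template that survives to $\ell=3$: as you correctly observe, the termwise convexity bound gives only $3/(n+3)$ for three-part partitions, which falls below $\mathrm{Avg}(n,2)\sim\ln(n)/n$, so your method cannot be iterated, whereas the paper's aggregate/asymptotic method is what powers Theorem \ref{avg2}. One small point: both your proof and the paper's implicitly require $n\geq 2$ for $\mathrm{Avg}(n,2)$ to be defined; you flag this explicitly, which is good.
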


Note that $\mathrm{Avg}(n, \ell)$ may be computed by averaging all of the integrals for partitions of size $n$ with $\ell$ parts, but alternatively it may be computed by evaluating the integral of the sum of all partitions of size $n$ with $\ell$ parts (see Section \ref{sec_operations} for more details).  Instead of considering each individual integral, we can evaluate a single integral involving all of the parts.  Although this method uses and produces the same information, it simplifies the estimation. The larger, combined partition has length $\ell$ times the number of partitions of size $n$ into $\ell$ parts.

The proof of Theorem \ref{avg1} requires one to know the exact form of all partitions of size $n$ and length $\ell$ when $\ell=1$ and $\ell=2$. This is a difficult problem in general, so we instead rely on asymptotic approximations for the second inequality. For this reason, the result is only guaranteed for sufficiently large $n$. 

\begin{theorem}\label{avg2}
    For sufficiently large $n$, $\mathrm{Avg}(n, 2) \leq \mathrm{Avg}(n, 3)$.
\end{theorem}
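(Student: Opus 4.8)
The plan is to reduce both averages to explicit sums over parts and then compare their leading asymptotics as $n\to\infty$. Recall first that for a partition $\lambda$ with parts $\lambda_1,\dots,\lambda_\ell$ we have
\[
\int_0^1 \hat f_\lambda(x)\,dx = \frac{1}{\ell}\sum_{i=1}^k \frac{m_i}{i+1} = \frac{1}{\ell}\sum_{j=1}^{\ell}\frac{1}{\lambda_j+1}.
\]
Combining this with the reformulation described after Theorem~\ref{avg1}, the average over all partitions of $n$ into exactly $\ell$ parts becomes
\[
\mathrm{Avg}(n,\ell) = \frac{S(n,\ell)}{\ell\,p(n,\ell)}, \qquad S(n,\ell) := \sum_{\substack{\lambda\vdash n\\ \ell(\lambda)=\ell}}\ \sum_{j=1}^{\ell}\frac{1}{\lambda_j+1},
\]
where $p(n,\ell)$ denotes the number of partitions of $n$ into exactly $\ell$ parts. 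The problem thus reduces to estimating $S(n,2)$ and $S(n,3)$, using the classical counts $p(n,2)=\lfloor n/2\rfloor$ and $p(n,3)\sim n^2/12$.

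For $\ell=2$ the computation is essentially exact. Writing each partition as $(n-b,b)$ with $1\le b\le\lfloor n/2\rfloor$, I would evaluate
\[
S(n,2) = \sum_{b=1}^{\lfloor n/2\rfloor}\frac{1}{b+1} + \sum_{b=1}^{\lfloor n/2\rfloor}\frac{1}{n-b+1} = \bigl(H_{\lfloor n/2\rfloor+1}-1\bigr) + \bigl(H_n - H_{\lceil n/2\rceil}\bigr) = \ln n + O(1),
\]
where $H_m$ is the $m$th harmonic number. Dividing by $2p(n,2)\sim n$ gives $\mathrm{Avg}(n,2)=\frac{\ln n}{n}\bigl(1+o(1)\bigr)$.

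For $\ell=3$ the work is in extracting the dominant term of $S(n,3)$, and this is the main obstacle. I would split the inner sum according to the smallest, middle, and largest part of each triple $a\ge b\ge c$. Since the weight $1/(v+1)$ favors small parts, only the smallest part is $O(1)$ across a positive proportion of the partitions: for fixed $c$ the number of $3$-part partitions of $n$ with least part $c$ is $\lfloor (n-c)/2\rfloor-c+1 = \frac{n}{2}\bigl(1+o(1)\bigr)$ when $c=o(n)$, so the smallest-part contribution is
\[
\sum_{c=1}^{\lfloor n/3\rfloor}\frac{1}{c+1}\left(\left\lfloor\frac{n-c}{2}\right\rfloor-c+1\right) = \frac{n}{2}\ln n\,\bigl(1+o(1)\bigr).
\]
By contrast the largest part satisfies $a\ge n/3$, so $1/(a+1)\le 3/n$ sums to $O(n)$; and the number of triples with middle part $b$ is at most $\min(b,n-2b)$, whence the middle-part sum is also $O(n)$. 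Hence $S(n,3)=\frac{n}{2}\ln n\,(1+o(1))$, and dividing by $3p(n,3)\sim n^2/4$ yields $\mathrm{Avg}(n,3)=\frac{2\ln n}{n}\bigl(1+o(1)\bigr)$.

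Comparing the two expansions gives $\mathrm{Avg}(n,3)/\mathrm{Avg}(n,2)\to 2$, so
\[
\mathrm{Avg}(n,3)-\mathrm{Avg}(n,2) = \frac{\ln n}{n}\bigl(1+o(1)\bigr) > 0
\]
for all sufficiently large $n$, which is the desired inequality. The delicate point is the estimate of $S(n,3)$: one must control the error terms well enough to be certain that the smallest-part contribution truly dominates and that the leading constants produce the strict factor-of-$2$ gap, rather than merely agreeing to leading order. This is precisely where asymptotics are indispensable and the restriction to large $n$ enters, since there is no closed form for the partitions of $n$ into three parts analogous to the $\ell=2$ case that would let one argue for every $n$.
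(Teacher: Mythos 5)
Your proposal is correct and follows essentially the same route as the paper: both arguments count the occurrences of each part size $i$ among all $3$-part partitions of $n$ (your smallest-part count $\lfloor(n-i)/2\rfloor-i+1$ is exactly the dominant family the paper uses), extract the leading term $\tfrac{n}{2}\ln n$ for the weighted sum, and divide by $3\,p(n,3)\sim n^2/4$ to get $\mathrm{Avg}(n,3)\sim 2\ln n/n$ versus $\mathrm{Avg}(n,2)\sim\ln n/n$. The only cosmetic difference is that you obtain a two-sided asymptotic for $S(n,3)$ by also bounding the middle- and largest-part contributions, whereas the paper settles for a lower bound via an integral comparison with a linear minorant, which already suffices.
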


Proving Conjecture \ref{conj} in general seems difficult. More progress could possibly be made in an asymptotic sense using the same method as the proof of Theorem \ref{avg2}. To fully generalize these results, one may need to find a general formula for the number of parts of size $i$ among all partitions of size $n$, or among all partitions of size $n$ and length $\ell$.
	
We prove Theorems \ref{avg1} and \ref{avg2} in Section \ref{sec_averages}.

\section{Proof of Theorem \ref{derivativethm}}\label{proofofderivativethm}
Let $\lambda=\langle 1^{m_1},2^{m_2},\dots,k^{m_k}\rangle$.  We will prove the derivative formula \eqref{derivativeformula} for $f_\lambda$ by induction.  First note that when $d=0$, \eqref{derivativeformula} is trivially true.  When $d=1$, we have that
$$f_\lambda^{(1)}(x)=\sum_{i=1}^{k}im_ix^{i-1},$$
which satisfies \eqref{derivativeformula} since ${d\brace 0}=0$.  Next, when $d=2$, we have that
\begin{align*}
f_{\lambda}^{(2)}(x)&=\sum_{i=2}^{k}i(i-1)m_ix^{i-2}\\
 &=\sum_{i=2}^{k}i^2m_ix^{i-2}-\sum_{i=2}^{k}im_ix^{i-2}.\\
 \intertext{Adding and subtracting $1m_1x^{-1}$, we have}
 f_{\lambda}^{(2)}(x)&=\sum_{i=2}^{k}i^2m_ix^{i-2}-\sum_{i=1}^{k}im_ix^{i-2}+1m_1x^{-1},\\
 \intertext{which yields the following when $1m_1x^{-1}$ is absorbed into the first sum:}
 f_{\lambda}^{(2)}(x) &=\sum_{i=1}^{k}i^2m_ix^{i-2}-x^{-1}\sum_{i=1}^{k}im_ix^{i-1}.
 \end{align*}
 The second sum above contains the first derivative $f^{(1)}_\lambda(x)$, so we have that
 \begin{align*}
 f_\lambda^{(2)}(x)&=\sum_{i=1}^{k}i^2m_ix^{i-2}-x^{-1}f_{\lambda}^{(1)}(x),
\end{align*}
which also satisfies \eqref{derivativeformula}.  Now, let $N\in\mathbb{N}$ and suppose \eqref{derivativeformula} holds for $d= N$.  We rewrite the $N$th derivative as follows:
\begin{align}\label{inductionstep}
f_{\lambda}^{(N)}(x)=\sum_{i=N}^{k}i^{N}m_ix^{i-N}+\sum_{i=1}^{N-1}i^Nm_ix^{-(N-i)}-\sum_{j=0}^{N-1}{N\brace j}x^{j-N}f_{\lambda}^{(j)}(x)\>.
\end{align}
We take the derivative to obtain
\begin{align}\label{intermediate}
f_{\lambda}^{(N+1)}(x)&=\sum_{i=N+1}^{k}i^{N}(i-N)m_ix^{i-N-1}-\sum_{i=1}^{N-1}i^N(N-i)m_ix^{-(N-i)-1}\nonumber\\
 &\hspace{15mm}-\sum_{j=0}^{N-1}{N\brace j}(j-N)x^{j-N-1}f_{\lambda}^{(j)}(x)-\sum_{j=0}^{N-1}{N\brace j}x^{j-N}f_{\lambda}^{(j+1)}(x).
\end{align} 
We use \eqref{inductionstep} to rewrite the first sum in \eqref{intermediate} as
\begin{align}\label{firstsum}
\sum_{i=N+1}^{k}i^{N}(i-N)m_ix^{i-N-1}
 &=\sum_{i=N+1}^{k}i^{N+1}m_ix^{i-(N+1)}-Nx^{-1}f_{\lambda}^{(N)}(x)+N\sum_{i=1}^{N-1}i^Nm_ix^{-(N-i)-1}\nonumber\\
 &\hspace{20mm}-N\sum_{j=0}^{N-1}{N\brace j}x^{j-N-1}f_{\lambda}^{(j)}(x)+N^{N+1}m_Nx^{-1} \> .
\end{align}
Distributing in the second and third sums of \eqref{intermediate}, we have that
\begin{align}\label{secondsum}
-\sum_{i=1}^{N-1}i^N(N-i)m_ix^{-(N-i)-1}=-N\sum_{i=1}^{N-1}i^Nm_ix^{-N+i-1}+\sum_{i=1}^{N-1}i^{N+1}m_ix^{-N+i-1}
\end{align}
and
\begin{align}\label{thirdsum}
-\sum_{j=0}^{N-1}{N\brace j}(j-N)x^{j-N-1}f_{\lambda}^{(j)}(x)&=N\sum_{j=0}^{N-1}{N\brace j}x^{-N+j-1}f_\lambda^{(j)}(x)\nonumber\\
&\hspace{2cm}-\sum_{j=0}^{N-1}j{N\brace j}x^{-N+j-1}f_\lambda^{(j)}(x).
\end{align}
Shifting the indices of the fourth sum of \eqref{intermediate}, we have that
\begin{align}\label{fourthsum}
-\sum_{j=0}^{N-1}{N\brace j}x^{j-N}f_{\lambda}^{(j+1)}(x)=-\sum_{j=1}^{(N-1)+1}{N\brace j-1}x^{-N+j-1}f_\lambda^{(j)}(x).
\end{align}
Putting together \eqref{firstsum}, \eqref{secondsum}, \eqref{thirdsum}, and \eqref{fourthsum}, rearranging, and using the fact that ${N\brace N}=1$, we then have that
\begin{align*}
f_{\lambda}^{(N+1)}(x)&=\sum_{i=N+1}^{k}i^{N+1}m_ix^{i-(N+1)}+\left(N^{N+1}m_Nx^{-1}+\sum_{i=1}^{N-1}i^{N+1}m_ix^{-(N+1-i)}\right)\\
 &\hspace{10mm}+\left(-N{N\brace N}x^{-1}f_{\lambda}^{(N)}(x)-\sum_{j=0}^{N-1}j{N\brace j}x^{j-N-1}f_{\lambda}^{(j)}(x)\right)\\
 &\hspace{10mm}+\Bigg(N\sum_{i=1}^{N-1}i^Nm_ix^{-(N+1-i)}-N\sum_{j=0}^{N-1}{N\brace j}x^{j-N-1}f_{\lambda}^{(j)}(x)\\
 &\hspace{10mm}-N\sum_{i=1}^{N-1}i^Nm_ix^{-(N+1-i)}+N\sum_{j=0}^{N-1}{N\brace j}x^{j-N-1}f_{\lambda}^{(j)}(x)\Bigg)\\
 &\hspace{10mm}-\sum_{j=1}^{(N+1)-1}{N\brace j-1}x^{j-N-1}f_{\lambda}^{(j)}(x)\>.
 \end{align*}
Combining the terms in the grouped expressions above, we obtain
\begin{align}\label{almostfinished}
 f_{\lambda}^{\>(N+1)}(x)&=\sum_{i=N+1}^{k}i^{N+1}m_ix^{i-(N+1)}+\sum_{i=1}^{(N+1)-1}i^{N+1}m_ix^{i-(N+1)}\nonumber\\
 &\hspace{10mm}-\sum_{j=0}^{(N+1)-1}j{N\brace j}x^{j-N-1}f_{\lambda}^{(j)}(x)-\sum_{j=1}^{(N+1)-1}{N\brace j-1}x^{j-N-1}f_{\lambda}^{(j)}(x).
 \end{align}
 Notice that the third sum in \eqref{almostfinished} can be rewritten as the sum from $j=1$ to $(N+1)-1$, since the $j=0$ term is zero.  We combine the first two sums and the last two sums as follows:
 \begin{align*}
 f_{\lambda}^{\>(N+1)}(x)&=\sum_{i=1}^{k}i^{N+1}m_ix^{i-(N+1)}-\sum_{j=1}^{(N+1)-1}\left(j{N\brace j}+{N\brace j-1}\right)x^{j-N-1}f_{\lambda}^{(j)}(x)\\
 &=\sum_{i=1}^{k}i^{N+1}m_ix^{i-(N+1)}-\sum_{j=1}^{(N+1)-1}{N+1\brace j}x^{j-(N+1)}f_{\lambda}^{(j)}(x),
\end{align*}
where this last equality uses a classical recurrence relation for Stirling numbers of the second kind \cite{Stanley}.  Notice again that the last sum can be rewritten as the sum from $j=0$ to $(N+1)-1$, since the $j=0$ term includes ${N+1\brace 0}=0$.  This completes the proof of Theorem \ref{derivativethm}.

\section{Derivatives of partition polynomials}\label{derivativequestions}

There are many open questions that arise when studying the derivatives of partition polynomials.  It is clear that $f_\lambda^{(d)}(1)\geq0$ for all $d\geq0$, since all of the parts of partitions are positive and all of their multiplicities are nonnegative.  It is also clear that evaluating the first few derivatives at $x=1$ yields
\begin{align*}
f_\lambda^{(0)}(1)&=\ell(\lambda),\\
f_\lambda^{(1)}(1)&=|\lambda|,\\
f_\lambda^{(2)}(1)&=\sum_{i=1}^ki^2m_i-{2\brace1}|\lambda|,\\
f_\lambda^{(3)}(1)&=\sum_{i=1}^ki^3m_i-{3\brace2}\sum_{i=1}^ki^2m_i-\left({3\brace1}-{3\brace2}{2\brace1}\right)|\lambda|,\\
f_\lambda^{(4)}(1)&=\sum_{i=1}^ki^4m_i-{4\brace3}\sum_{i=1}^ki^3m_i-\left({4\brace2}-{4\brace3}{3\brace2}\right)\sum_{i=1}^ki^2m_i\\
&\hspace{1.875cm}-\left({4\brace1}-{4\brace2}{2\brace1}-{4\brace3}{3\brace1}+{4\brace3}{3\brace2}{2\brace1}\right)|\lambda|.
\end{align*}
It would be nice to find a combinatorial interpretation of the $d$th derivative of the partition polynomial at $x=1$.  As a result, properties of the derivatives could potentially provide new information about the partition.  A good start would be to find an explicit formula for $f_\lambda^{(d)}(1)$ for all $0\leq d\leq k$ that is more enlightening than simply plugging $x=1$ into \eqref{derivativeformula}.  It may be helpful to consider \emph{$k$th moments of partitions}, defined by Zemel \cite{Z} as $$p_k(\lambda):=\sum_{i\geq1}i^km_i,$$ as these moments appear in the higher derivatives of $f_\lambda$.

\begin{question}
Is there a nice combinatorial interpretation of $f_\lambda^{(d)}(1)$?
\end{question}

We now provide an explicit example of the derivatives of the partition polynomials for two specific partitions of the same length and size.
\begin{example}\label{example}
Let $\lambda_1=\langle1^1,2^2,3^0,4^0,5^1\rangle$ and $\lambda_2=\langle1^1,2^1,3^1,4^1\rangle$.  Here are the partition polynomial, its derivatives, and their evaluations at $x=1$ for $\lambda_1$:\\
\begin{minipage}{.5\textwidth}
\begin{align*}
f_{\lambda_1}^{(0)}(x)&=x^5+2x^2+x \\
f_{\lambda_1}^{(1)}(x)&=5x^4+4x+1 \\
f_{\lambda_1}^{(2)}(x)&=20x^3+4 \\
f_{\lambda_1}^{(3)}(x)&=60x^2 \\
f_{\lambda_1}^{(4)}(x)&=120x \\
f_{\lambda_1}^{(5)}(x)&=120
\end{align*}
\end{minipage}%
\begin{minipage}{.35\textwidth}
\begin{align*}
f_{\lambda_1}^{(0)}(1)&=4 \\
f_{\lambda_1}^{(1)}(1)&=10 \\
f_{\lambda_1}^{(2)}(1)&=24 \\
f_{\lambda_1}^{(3)}(1)&=60 \\
f_{\lambda_1}^{(4)}(1)&=120 \\
f_{\lambda_1}^{(5)}(1)&=120
\end{align*}
\end{minipage}\\

\noindent Here are the partition polynomial, its derivatives, and their evaluations at $x=1$ for $\lambda_2$:\\
\begin{minipage}{.565\textwidth}
\begin{align*}
f_{\lambda_2}^{(0)}(x)&=x^4+x^3+x^2+x \\
f_{\lambda_2}^{(1)}(x)&=4x^3+3x^2+2x+1 \\
f_{\lambda_2}^{(2)}(x)&=12x^2+6x+2 \\
f_{\lambda_2}^{(3)}(x)&=24x+6 \\
f_{\lambda_2}^{(4)}(x)&=24
\end{align*}
\end{minipage}%
\begin{minipage}{.21\textwidth}
\begin{align*}
f_{\lambda_2}^{(0)}(1)&=4 \\
f_{\lambda_2}^{(1)}(1)&=10 \\
f_{\lambda_2}^{(2)}(1)&=20 \\
f_{\lambda_2}^{(3)}(1)&=30 \\
f_{\lambda_2}^{(4)}(1)&=24
\end{align*}
\end{minipage}\\

\noindent Although $\lambda_1$ and $\lambda_2$ share the same length $\ell(\lambda_1)=\ell(\lambda_2)=4$ and the same size $|\lambda_1|=|\lambda_2|=10$, the evaluations of their partition polynomial derivatives at $x=1$ yield different numbers starting at the second derivative.
\end{example}

Obviously the evaluation of the partition polynomial at $x=1$ can distinguish any two partitions of different lengths, and the evaluation of its first derivative can distinguish any two partitions of the same length and different sizes.  A natural open question that arises from Example \ref{example} is whether the evaluation of its second derivative, or third derivative, or $d$th derivative for some positive integer $d$, can distinguish any two unequal partitions, even if they have the same length and size.  Answering this question could possibly shed more light on a combinatorial interpretation of the higher derivatives, or vice versa.  For the following question, let $\mathrm{lg}(\lambda)$ denote the largest part of the partition $\lambda$, and recall that $f_\lambda^{(d)}(x)=0$ for all $d>\mathrm{lg}(\lambda)$.

\begin{question}\label{question2}
If $\lambda,\lambda'$ are any two unequal partitions, is it true that $f_\lambda^{(d)}(1)\neq f_{\lambda'}^{(d)}(1)$ for some positive integer $d\leq\min\{\mathrm{lg}(\lambda),\mathrm{lg}(\lambda')\}$?
\end{question}

One specific direction in which to investigate Question \ref{question2} is to search for a counterexample for a fixed positive integer $d$: a pair of unequal partitions $\lambda,\lambda'$ for which $f_\lambda^{(r)}(1)=f_{\lambda'}^{(r)}(1)$ for all $r\leq d$.  We provide a partial answer to this question: the second derivative evaluation does not necessarily distinguish two unequal partitions of the same length and size.

\begin{proposition}
There exists a positive integer $N$ such that if $n\geq N$, then there are two unequal partitions $\lambda_1,\lambda_2$ with $\ell(\lambda_1)=\ell(\lambda_2)$ and $|\lambda_1|=|\lambda_2|=n$ such that $f_{\lambda_1}^{(2)}(1)=f_{\lambda_2}^{(2)}(1)$.
\end{proposition}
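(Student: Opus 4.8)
The plan is to reduce the statement to a classical Diophantine matching problem and then amplify a single explicit solution by padding.

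First I would simplify $f_\lambda^{(2)}(1)$. From the evaluation recorded above, $f_\lambda^{(2)}(1)=\sum_{i=1}^k i^2 m_i-{2\brace 1}|\lambda|$, and since ${2\brace 1}=1$ this reads
$$f_\lambda^{(2)}(1)=\sum_{i=1}^k i^2 m_i-|\lambda|=\sum_{i=1}^k i(i-1)m_i.$$
Hence if $\lambda_1$ and $\lambda_2$ have the same size, then $f_{\lambda_1}^{(2)}(1)=f_{\lambda_2}^{(2)}(1)$ holds if and only if their second moments $p_2(\lambda)=\sum_i i^2 m_i$ agree. So it suffices to produce, for all sufficiently large $n$, two unequal partitions of $n$ that share the same length, the same size, and the same sum of squares of their parts (counted with multiplicity). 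This is precisely a degree-$2$ instance of the Prouhet--Tarry--Escott problem: two multisets of equal cardinality with equal first and second power sums.

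Second, I would exhibit one base solution and then amplify it. The multisets $\{4,4,1\}$ and $\{5,2,2\}$ each have three elements, sum $9$, and sum of squares $33$; regarded as partitions $\lambda_1=\langle 1^1,4^2\rangle$ and $\lambda_2=\langle 2^2,5^1\rangle$ of $n=9$, they share length $3$ and satisfy $f_{\lambda_1}^{(2)}(1)=f_{\lambda_2}^{(2)}(1)=24$. The key observation is that adjoining the same part to two partitions changes the length, the size, and the second moment by identical amounts in each; consequently, if two partitions already agree in all three statistics, then so do the partitions obtained by adjoining any common multiset of parts, and they remain distinct if the originals were. In particular, adjoining parts equal to $1$ contributes $1(1-1)=0$ to $\sum_i i(i-1)m_i$, so the value $24$ is preserved. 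Thus for each integer $n\geq 9$ I would adjoin $n-9$ copies of the part $1$ to both base partitions, obtaining the unequal partitions $\langle 1^{\,1+(n-9)},4^2\rangle$ and $\langle 1^{\,n-9},2^2,5^1\rangle$ of size $n$, both of length $n-6$ and both satisfying $f^{(2)}(1)=24$. Taking $N=9$ then completes the argument.

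The only step carrying real content is producing the base pair, i.e.\ solving the system (equal count, equal sum, equal sum of squares) with distinct multisets of positive integers. Here one must observe that two parts are provably insufficient, since equal sum and equal sum of squares of a pair force the pair itself to coincide, so at least three parts are needed; the displayed example supplies such a triple. Once this pair is in hand, the padding step preserves all the relevant invariants automatically, so I do not expect any further obstacle.
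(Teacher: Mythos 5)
Your proof is correct, and it takes a genuinely different route from the paper. The paper argues by pigeonhole: using the asymptotic $p(n,5)\sim n^4/2880$ of Knessl--Keller, it observes that $f_\lambda^{(2)}(1)=\sum_i i^2m_i-n\leq n^3-n$ for partitions of size $n$ and length $5$, so for large $n$ there are far more partitions of length $5$ than possible values of $f_\lambda^{(2)}(1)$, forcing a collision. Your argument is instead constructive: you reduce the condition $f_{\lambda_1}^{(2)}(1)=f_{\lambda_2}^{(2)}(1)$ (for equal sizes) to equality of second moments, exhibit the explicit Prouhet--Tarry--Escott pair $\{4,4,1\}$ and $\{5,2,2\}$ with equal cardinality, sum, and sum of squares, and pad both with parts equal to $1$ --- which contribute $i(i-1)m_i=0$ to the second derivative at $1$ --- to reach every $n\geq 9$. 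Each approach buys something the other does not. Yours yields an explicit threshold $N=9$ and explicit witnesses, which the paper's nonconstructive counting argument cannot provide; it also isolates the clean algebraic fact that a colliding pair of length $2$ is impossible, so length $3$ is optimal for the base example. The paper's pigeonhole argument, on the other hand, shows that collisions are unavoidable among partitions of any \emph{fixed} length $s\geq 5$ (indeed in great abundance), whereas your padded pairs have length $n-6$, a single length depending on $n$; one would need to pad with other common parts to recover collisions at prescribed lengths. Both proofs are valid, and yours is arguably the more elementary and informative for the proposition exactly as stated.
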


\begin{proof}
We use the pigeonhole principle to prove the proposition.  Denote the number of partitions of $n$ into $s$ parts by $p(n,s)$.  It can be shown \cite{KK} that $$p(n,s)\sim\frac{n^{s-1}}{s((s-1)!)^2}$$ as $n\rightarrow\infty$ for $s=O(1)$.  In particular, we have that $p(n,5)\sim n^4/2880$.  It suffices to show that $f_\lambda^{(2)}(1)$ is bounded by an asymptotically smaller expression.  Suppose $\lambda=\langle1^{m_1},2^{m_2},\dots,k^{m_k}\rangle$ is a partition of size $n$ and length 5.  Then we have that $$f_\lambda^{(2)}(1)=\sum_{i=1}^ki^2m_i-n\leq k^2n-n\leq n^3-n.$$  Since $(n^3-n)/p(n,5)\rightarrow0$ as $n\rightarrow\infty$, there must be two unequal partitions of the same size and length 5 with equal second derivative evaluations.
\end{proof}

In fact, this argument holds for all $s\geq5$, so the second derivative evaluation fails to distinguish unequal partitions of the same size and length at least 5.  One could also refine the question to ask how many derivatives are necessary in order to distinguish any two unequal partitions of fixed size $n$ and length $s$.

Another interesting observation is that taking the derivative of the partition polynomial of one partition yields the partition polynomial of a different partition.  Explicitly, if we define the partition $\lambda=\lambda^{(0)}=\langle1^{m_1},2^{m_2},\dots,k^{m_k}\rangle$, then $f_\lambda^{(1)}(x)$ is the partition polynomial of the new partition $\lambda^{(1)}$ defined by $\lambda^{(1)}=\langle1^{2m_2},2^{3m_3},\dots,(k-1)^{km_k}\rangle$.  Continuing in this way, we obtain the following (finite) sequence of partitions $\left(\lambda^{(d)}\right)_{0\leq d<k}$ whose partition polynomials are related by differentiation:
\begin{align}\label{sequence}
\lambda^{(d)}&=\left\langle1^{(d+1)!m_{d+1}/1!},2^{(d+2)!m_{d+2}/2!},\dots,(k-d)^{k!m_k/(k-d)!}\right\rangle\hspace{.25cm}\text{for all }0\leq d<k.
\end{align}
Strictly speaking, these partitions also have parts of size zero which we exclude, and parts of negative size which occur with multiplicity zero.  We have that for all $0\leq d<k$,
\begin{equation}\label{lengthsandsizes}
\ell\left(\lambda^{(d)}\right)=\sum_{i=d+1}^k\frac{i!}{(i-d)!}m_i\hspace{1cm}\text{and}\hspace{1cm}\left|\lambda^{(d)}\right|=\sum_{i=d+1}^k\frac{i!}{(i-d-1)!}m_i.
\end{equation}
Therefore, the process of differentiation guarantees that the following relationship between the sizes and lengths of the partitions in the sequence \eqref{sequence} holds for all $1\leq d\leq k$:
\begin{equation}\label{relationship}
\left|\lambda^{(d-1)}\right|=\ell\left(\lambda^{(d)}\right)+d!m_d,
\end{equation}
where $\ell\left(\lambda^{(k)}\right)=0$ since $\lambda^{(k)}$ is the empty partition.

\begin{example}
Let $\lambda=\langle1^1,2^0,3^3,4^1\rangle$.  Then we have
\begin{align*}
\lambda^{(0)}&=\langle1^1,2^0,3^3,4^1\rangle,\\
\lambda^{(1)}&=\langle1^0,2^9,3^4\rangle,\\
\lambda^{(2)}&=\langle1^{18},2^{12}\rangle,\\
\lambda^{(3)}&=\langle1^{24}\rangle.
\end{align*}
We also see that $\ell\left(\lambda^{(0)}\right)=5$, $\left|\lambda^{(3)}\right|=4!m_4=24$, and
\begin{align*}
\left|\lambda^{(0)}\right|=\ell\left(\lambda^{(1)}\right)+1!m_1&=14,\\
\left|\lambda^{(1)}\right|=\ell\left(\lambda^{(2)}\right)+2!m_2&=30,\\
\left|\lambda^{(2)}\right|=\ell\left(\lambda^{(3)}\right)+3!m_3&=42.
\end{align*}
\end{example}

It would be interesting to investigate other relationships among partitions in the sequence \eqref{sequence} which are related by differentiation of partition polynomials, such as patterns in the sequence of lengths or the sequence of sizes from \eqref{lengthsandsizes} or \eqref{relationship}, the zeros of the polynomials, relationships among the polynomials associated with other sequences of partitions, etc.

\section{Integrals of normalized partition polynomials}\label{section_integrals}

As Figure \ref{figure1} suggests, there are a few properties that all normalized partition polynomials have in common. Namely, they all start at the point $(0, 0)$, end at the point $(1, 1)$, and are bounded above by the line $y=x$.  In the following proposition, recall that normalized partition polynomials are defined as functions from $[0,1]$ to $\mathbb{R}$.
	
	\begin{proposition}
		\label{prop2.2}
		Let $\hat{f}_\lambda(x)$ be a normalized partition polynomial. Then following properties hold.
		\begin{enumerate}
			\item $\hat{f}_\lambda(0) = 0$ and $f_\lambda(1) = 1$.
			\item $\hat{f}_\lambda(x) \leq x$.
		\end{enumerate}
	\end{proposition}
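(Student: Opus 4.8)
The plan is to dispatch the two boundary values by direct substitution and then to obtain the upper bound from a single elementary pointwise inequality. For the first part, at $x=0$ every summand $m_i x^i$ with $i\geq 1$ vanishes, so $\hat{f}_\lambda(0)=0$; and at $x=1$ the normalizing factor does all the work, since $\sum_{i=1}^{k} m_i=\ell(\lambda)$ yields $\hat{f}_\lambda(1)=\ell(\lambda)/\ell(\lambda)=1$. (Implicit throughout is that $\lambda$ is nonempty, so that $\ell(\lambda)>0$ and the normalization is defined.) No estimation is needed for this part.

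For the second part, I would clear the strictly positive denominator $\ell(\lambda)=\sum_{i=1}^{k} m_i$ and reduce the claim $\hat{f}_\lambda(x)\leq x$ to the equivalent inequality
$$\sum_{i=1}^{k} m_i x^i \;\leq\; x\sum_{i=1}^{k} m_i,$$
which after rearranging becomes
$$\sum_{i=1}^{k} m_i\left(x^i-x\right)\;\leq\;0.$$
The key step is then the pointwise bound $x^i\leq x$, valid for every $x\in[0,1]$ and every integer $i\geq 1$ because on $[0,1]$ the powers $x^i$ are nonincreasing in $i$ and hence at most $x^1=x$. Since each multiplicity $m_i$ is nonnegative, every term $m_i(x^i-x)$ is at most $0$, and summing over $i$ gives the desired inequality.

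There is no genuine obstacle here; the only point requiring any care is that the denominator $\ell(\lambda)$ be strictly positive, so that multiplying through preserves the direction of the inequality, which holds since every partition under consideration has at least one part. It is worth remarking that the $i=1$ term contributes equality while each term with $i\geq 2$ is strictly negative on $(0,1)$, so equality $\hat{f}_\lambda(x)=x$ holds on all of $(0,1)$ precisely for the partitions $\lambda=\langle 1^{m_1}\rangle$ whose only part size is $1$. This accounts for the tightness of the bound $y=x$ visible in Figure \ref{figure1}.
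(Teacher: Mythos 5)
Your argument is correct and essentially identical to the paper's: the key step in both is the pointwise bound $x^i \leq x$ on $[0,1]$ together with the nonnegativity of the multiplicities and the identity $\sum_{i=1}^{k} m_i = \ell(\lambda)$ (the paper simply omits part 1 as immediate, which you verify by substitution). Your closing remark on the equality case is also consistent with the paper's Proposition \ref{prop2.3}.
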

	
	\begin{proof}
	We omit the proof of part 1, and we prove part 2 here.  Since $0\leq x\leq 1$, we have that $x^i\leq x$ for all $i\geq1$.  Then we see that $$\hat{f}_\lambda(x)=\frac{1}{\ell(\lambda)}\sum_{i=1}^km_ix^i\leq\frac{1}{\ell(\lambda)}\sum_{i=1}^km_ix.$$  The observation that $\sum_{i=1}^km_i=\ell(\lambda)$ completes the proof.
	\end{proof}
	
	Additionally, we can characterize completely the cases when $\hat{f}_\lambda$ has more than two fixed points.
	
	\begin{proposition}
		\label{prop2.3}
		Let $\hat{f}_\lambda$ be a normalized partition polynomial. Then $\hat{f}_\lambda(x) = x$ if and only if $m_1 = \ell(\lambda)$; that is, $\lambda$ only contains parts of size $1$.
	\end{proposition}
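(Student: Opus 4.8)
The plan is to prove the two implications separately, obtaining the forward direction as a strict refinement of the inequality in Proposition \ref{prop2.2}(2).

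The reverse implication is immediate: if $m_1 = \ell(\lambda)$, then since $\ell(\lambda) = \sum_{i=1}^k m_i$ and every $m_i$ is a nonnegative integer, we must have $m_i = 0$ for all $i \geq 2$. Thus $\lambda = \langle 1^{m_1}\rangle$ and
$$\hat{f}_\lambda(x) = \frac{1}{\ell(\lambda)}\, m_1 x = x.$$

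For the forward direction I would argue the contrapositive. Suppose $m_1 < \ell(\lambda)$, so there is an index $i_0 \geq 2$ with $m_{i_0} \geq 1$. The key point is that the elementary bound $x^i \leq x$ underlying Proposition \ref{prop2.2}(2) is \emph{strict} for every $i \geq 2$ and every $x \in (0,1)$. Writing
$$\hat{f}_\lambda(x) = \frac{1}{\ell(\lambda)}\left(m_1 x + \sum_{i \geq 2} m_i x^i\right)$$
and using $x^{i_0} < x$ on the term with $m_{i_0} \geq 1$ (while bounding the remaining terms weakly by $x^i \leq x$), I would conclude $\hat{f}_\lambda(x) < \frac{1}{\ell(\lambda)}\sum_{i=1}^k m_i x = x$ for every $x \in (0,1)$. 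Hence $\hat{f}_\lambda$ agrees with the identity nowhere on the open interval, and in particular $\hat{f}_\lambda(x) = x$ fails.

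Combined with $\hat{f}_\lambda(0) = 0$ and $\hat{f}_\lambda(1) = 1$ from Proposition \ref{prop2.2}(1), this dichotomy also settles the ``more than two fixed points'' question motivating the statement: either $\lambda$ consists only of $1$'s and every point of $[0,1]$ is fixed, or $\hat{f}_\lambda$ has exactly the two fixed points $0$ and $1$. The only real care needed is in the inequality bookkeeping---ensuring that at least one summand is strictly decreased, which is precisely what a part of size $\geq 2$ guarantees. If instead one reads $\hat{f}_\lambda(x) = x$ as a polynomial identity from the outset, the converse drops out at once by comparing coefficients of $x^i$ on both sides, forcing $m_1 = \ell(\lambda)$ and $m_i = 0$ for $i \geq 2$; I prefer the strict-inequality route since it simultaneously resolves the fixed-point count discussed above.
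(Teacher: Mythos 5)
Your proposal is correct and follows essentially the same route as the paper: the reverse direction is the direct computation $\hat{f}_\lambda(x)=\frac{1}{\ell(\lambda)}\cdot\ell(\lambda)x=x$, and the forward direction is the contrapositive using the strict inequality $x^j<x$ for $0<x<1$ when some $m_j>0$ with $j>1$. The extra remarks about the fixed-point count and the coefficient-comparison alternative are fine but not needed.
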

	
	\begin{proof}
	Suppose $m_1=\ell(\lambda)$.  Then $\hat{f}_\lambda(x)=\frac{1}{\ell(\lambda)}\cdot\ell(\lambda)x=x$.  Now suppose $m_j>0$ for some $j>1$.  Then, using the fact that $x^j<x$ when $0<x<1$, we have that $$\hat{f}_\lambda(x)=\frac{1}{\ell(\lambda)}\sum_{i=1}^km_ix^i<\frac{1}{\ell(\lambda)}\sum_{i=1}^km_ix=x.$$  This completes the proof.
	\end{proof}

	\subsection{Integration basics}
	
	In this subsection, we show some basic integration results for normalized partition polynomials. We first show that integration on the interval $[0, 1]$ may be viewed as a finite sum of ``harmonic-like" numbers.
	
	\begin{theorem}
		\label{integral_formula}
		Let $\hat{f}_\lambda$ be a normalized partition polynomial. Then we have that
		\begin{equation*}
		\int_{0}^{1} \hat{f}_\lambda(x) \,dx = \frac{1}{\ell(\lambda)} \sum_{i=1}^{k} \frac{m_i}{i+1} .
		\end{equation*}
	\end{theorem}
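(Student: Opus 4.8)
The plan is to compute the integral directly by term-by-term integration, which is completely justified here because the normalized partition polynomial $\hat{f}_\lambda(x) = \frac{1}{\ell(\lambda)} \sum_{i=1}^{k} m_i x^i$ is a \emph{finite} linear combination of monomials, so there are no convergence issues to address. First I would invoke linearity of the definite integral to pull the constant $1/\ell(\lambda)$ and each multiplicity $m_i$ outside the integral, reducing the problem to evaluating the $k$ elementary integrals $\int_0^1 x^i\,dx$ for $1 \le i \le k$.

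The one computational step is the power rule: for each $i \ge 1$,
$$\int_0^1 x^i\,dx = \left[\frac{x^{i+1}}{i+1}\right]_0^1 = \frac{1}{i+1}.$$
Substituting this into the term-by-term expansion and recombining the finite sum immediately gives
$$\int_0^1 \hat{f}_\lambda(x)\,dx = \frac{1}{\ell(\lambda)}\sum_{i=1}^{k} m_i \cdot \frac{1}{i+1} = \frac{1}{\ell(\lambda)}\sum_{i=1}^{k}\frac{m_i}{i+1},$$
which is exactly the claimed identity.

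There is no genuine obstacle in this argument; the interchange of the (finite) sum with the integral requires no dominated- or uniform-convergence machinery. The value of the statement is conceptual rather than technical: it rewrites the analytic quantity $\int_0^1 \hat{f}_\lambda$ as a finite arithmetic expression in the \emph{harmonic-like} terms $m_i/(i+1)$, and it is this discrete reformulation that one would then leverage in the subsequent density and averaging results (Theorems \ref{main_result}, \ref{avg1}, and \ref{avg2}).
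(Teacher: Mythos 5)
Your proposal is correct and follows exactly the same route as the paper's proof: direct term-by-term integration of the finite sum using the power rule $\int_0^1 x^i\,dx = 1/(i+1)$. No further comment is needed.
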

	
	\begin{proof}
		We integrate directly to obtain
		\begin{align*}
		\int_{0}^{1}\hat{f}_\lambda(x)\,dx & = \int_{0}^{1} \frac{1}{\ell(\lambda)} \sum_{i=1}^{k} m_i x^i\,dx = \frac{1}{\ell(\lambda)} \sum_{i=1}^{k} m_i\cdot\frac{x^{i+1}}{i+1} \bigg|_{0}^{1} = \frac{1}{\ell(\lambda)} \sum_{i=1}^{k} \frac{m_i}{i+1}.
		\end{align*}
	\end{proof}
	
	We now obtain an elementary bound for integrals of normalized partition polynomials.
	
	\begin{proposition}\label{inequal}
		Let $\hat{f}_\lambda$ be a normalized partition polynomial. Then
		\begin{equation*}
		0 < \int_{0}^{1}\hat{f}_\lambda(x)\,dx \leq\frac{1}{2} .
		\end{equation*}
	\end{proposition}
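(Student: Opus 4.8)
The plan is to derive both bounds directly from results already established, so that essentially no fresh computation is required. For the upper bound I would invoke the pointwise inequality $\hat{f}_\lambda(x)\le x$ from part 2 of Proposition \ref{prop2.2}. Since integration preserves inequalities between continuous functions on $[0,1]$, this immediately yields
$$\int_0^1 \hat{f}_\lambda(x)\,dx \le \int_0^1 x\,dx = \frac{1}{2},$$
which is exactly the claimed upper bound. To confirm that the bound $1/2$ is sharp (and hence that the non-strict inequality is the correct one), I would appeal to Proposition \ref{prop2.3}: equality holds throughout precisely when $\hat{f}_\lambda(x)=x$ identically, which occurs if and only if $\lambda$ consists only of parts of size $1$, in which case the integral equals $1/2$.

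For the lower bound I would use the closed form from Theorem \ref{integral_formula}, namely
$$\int_0^1 \hat{f}_\lambda(x)\,dx = \frac{1}{\ell(\lambda)}\sum_{i=1}^{k} \frac{m_i}{i+1}.$$
Because every multiplicity $m_i$ is a nonnegative integer and $\ell(\lambda)>0$ for any nonempty partition, each summand $m_i/(i+1)$ is nonnegative, and at least one multiplicity is positive, so the entire sum is strictly positive. (Alternatively, one could argue directly that $\hat{f}_\lambda$ is a nonnegative continuous function on $[0,1]$ that is not identically zero, and hence has strictly positive integral.)

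There is essentially no obstacle here: the proposition is an immediate corollary of the integral formula of Theorem \ref{integral_formula} together with the pointwise bound $\hat{f}_\lambda(x)\le x$ of Proposition \ref{prop2.2}. The only point requiring a moment's care is verifying that the upper endpoint $1/2$ is actually attained, so that writing $\le \frac{1}{2}$ rather than $< \frac{1}{2}$ is justified; this is exactly what the equality characterization in Proposition \ref{prop2.3} supplies.
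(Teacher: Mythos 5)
Your proof is correct and matches the paper's approach: the paper simply states that the proposition ``follows directly from Proposition \ref{prop2.2},'' i.e., from the pointwise bound $\hat{f}_\lambda(x)\le x$ together with positivity, which is exactly what you spell out. Your added observations about sharpness via Proposition \ref{prop2.3} and the alternative lower-bound argument via Theorem \ref{integral_formula} are accurate and consistent with the paper's surrounding discussion.
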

	
	Proposition \ref{inequal} follows directly from Proposition \ref{prop2.2}.
	
	Proposition \ref{prop2.3} and the discussion afterwards show that the right inequality in Proposition \ref{inequal} is strict if a partition contains parts of size greater than $1$. Despite this, one can find a sequence of normalized partition polynomials for partitions with parts greater than $1$ whose integrals approach $1/2$. We discuss this further in Section \ref{sec_integralproofs}.
	
	To make the distinction between partitions with parts greater than $1$ and those which only have parts of size $1$, we call the former \emph{non-trivial} partitions.

	\subsection{Operations on partitions}\label{sec_operations}
	
	In this subsection, unless otherwise stated, $\lambda$ and $\gamma$ are partitions whose multiplicities are given by the sequence $(a_i)$ and $(b_i)$ respectively. We denote the largest part of $\lambda$ by $k_\lambda$ and the largest part of $\gamma$ by $k_\gamma$.  We define the \emph{sum} $\lambda \oplus \gamma$ as the partition with multiplicities $a_i+b_i$ and largest part $\max\{k_\lambda, k_\gamma\}$.  In other words, we obtain the sum $\lambda\oplus\gamma$ by combining all of the parts of $\lambda$ and $\gamma$ into one combined partition.  This operation $\oplus$ on partitions has been previously defined \cite{A}, and Schneider more recently named this operation the \emph{product} of two partitions as part of his multiplicative partition theory \cite[Def.\ 1.2.2]{S}.
	
	The normalized partition polynomial of $\lambda \oplus \gamma$ is given by
	\begin{equation*}
	\hat{f}_{\lambda \oplus \gamma}(x) = \frac{1}{\ell(\lambda) + \ell(\gamma)} \left( \sum_{i=1}^{k_\lambda} a_ix^i + \sum_{i=1}^{k_\gamma} b_ix^i \right).
	\end{equation*}
	The following proposition shows what happens when we integrate the normalized partition polynomial of the sum of two partitions.
	
	\begin{proposition}
		\label{addition_formula}
		We have that
		\begin{align*}
		\int_{0}^{1}\hat{f}_{\lambda \oplus \gamma}(x)\,dx = \frac{\ell(\lambda)}{\ell(\lambda)+\ell(\gamma)} \int_{0}^{1} \hat{f}_\lambda(x)\,dx + \frac{\ell(\gamma)}{\ell(\lambda)+\ell(\gamma)} \int_{0}^{1} \hat{f}_\gamma(x)\,dx .
		\end{align*} 
	\end{proposition}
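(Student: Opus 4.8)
The plan is to reduce everything to Theorem \ref{integral_formula}, which already evaluates the integral of a single normalized partition polynomial in closed form. The identity to be proved is a weighted-average relation, so the strategy is to compute the left-hand side directly by integration and then recognize the two pieces that emerge as scalar multiples of the integrals of $\hat{f}_\lambda$ and $\hat{f}_\gamma$.

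First I would start from the displayed formula for $\hat{f}_{\lambda\oplus\gamma}$ given just before the statement, namely
\[
\hat{f}_{\lambda \oplus \gamma}(x) = \frac{1}{\ell(\lambda) + \ell(\gamma)} \left( \sum_{i=1}^{k_\lambda} a_ix^i + \sum_{i=1}^{k_\gamma} b_ix^i \right),
\]
and integrate term by term over $[0,1]$. Using $\int_0^1 x^i\,dx = 1/(i+1)$ and pulling the constant $1/(\ell(\lambda)+\ell(\gamma))$ out front yields
\[
\int_0^1 \hat{f}_{\lambda\oplus\gamma}(x)\,dx = \frac{1}{\ell(\lambda)+\ell(\gamma)}\left(\sum_{i=1}^{k_\lambda}\frac{a_i}{i+1} + \sum_{i=1}^{k_\gamma}\frac{b_i}{i+1}\right).
\]
Next I would invoke Theorem \ref{integral_formula} twice, once for each partition, to rewrite the inner sums: that theorem gives $\sum_{i=1}^{k_\lambda} a_i/(i+1) = \ell(\lambda)\int_0^1\hat{f}_\lambda(x)\,dx$ and likewise $\sum_{i=1}^{k_\gamma} b_i/(i+1) = \ell(\gamma)\int_0^1\hat{f}_\gamma(x)\,dx$. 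Substituting these back in and distributing the prefactor produces exactly the two weighted terms in the claimed identity, completing the argument.

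There is no serious obstacle here; the proof is a short direct computation. The only point requiring a moment of care is the identity $\ell(\lambda\oplus\gamma) = \ell(\lambda)+\ell(\gamma)$, which is implicit in the normalization factor $1/(\ell(\lambda)+\ell(\gamma))$ and holds because the length of a partition is the sum of its multiplicities, so the multiplicities $a_i+b_i$ of $\lambda\oplus\gamma$ sum to $\ell(\lambda)+\ell(\gamma)$. One should also observe that when $k_\lambda \neq k_\gamma$ the two sums run over different ranges, but this causes no trouble since the absent multiplicities are zero, and the splitting of $\sum_i (a_i+b_i)x^i$ into the two sums $\sum_i a_i x^i$ and $\sum_i b_i x^i$ remains valid under the convention of zero-padding the shorter sequence.
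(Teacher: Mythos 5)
Your proposal is correct and follows essentially the same route as the paper: integrate the displayed formula for $\hat{f}_{\lambda\oplus\gamma}$ term by term, then use Theorem \ref{integral_formula} (equivalently, multiply and divide by $\ell(\lambda)$ and $\ell(\gamma)$) to recognize the two sums as $\ell(\lambda)\int_0^1\hat{f}_\lambda$ and $\ell(\gamma)\int_0^1\hat{f}_\gamma$. Your added remarks about zero-padding when $k_\lambda\neq k_\gamma$ and about $\ell(\lambda\oplus\gamma)=\ell(\lambda)+\ell(\gamma)$ are fine points the paper leaves implicit.
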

	
	\begin{proof}
		We proceed by direct computation.
		\begin{align*}
		\int_{0}^{1}\hat{f}_{\lambda \oplus \gamma}(x)\,dx & = \frac{1}{\ell(\lambda) + \ell(\gamma)} \left( \sum_{i=1}^{k_1} \frac{a_i}{i+1} + \sum_{i=1}^{k_2} \frac{b_i}{i+1} \right) \\
		& = \frac{1}{\ell(\lambda) + \ell(\gamma)} \left( \ell(\lambda) \frac{1}{\ell(\lambda)} \sum_{i=1}^{k_1}\frac{a_i}{i+1} + \ell(\gamma) \frac{1}{\ell(\gamma)}\sum_{i=1}^{k_2} \frac{b_i}{i+1} \right) \\
		& = \frac{1}{\ell(\lambda) + \ell(\gamma)} \left( \ell(\lambda) \int_{0}^{1}\hat{f}_\lambda(x)\,dx + \ell(\gamma) \int_{0}^{1}\hat{f}_\gamma(x)\,dx \right).
		\end{align*}
		The result follows.
	\end{proof}
	
	A few important corollaries result from this proposition.
	
	\begin{corollary}\label{cor1}
		We have that
		\begin{equation*}
		\int_{0}^{1} \hat{f}_{\lambda \oplus \lambda}(x)\,dx = \int_{0}^{1} \hat{f}_{\lambda}(x)\,dx .
		\end{equation*}
	\end{corollary}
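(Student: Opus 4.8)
The plan is to invoke Proposition \ref{addition_formula} directly, specializing the second partition to be $\lambda$ itself. Since that proposition is stated for arbitrary partitions, the choice $\gamma = \lambda$ is legitimate and immediately produces the sum $\lambda \oplus \lambda$ on the left-hand side, so Corollary \ref{cor1} should follow without any new machinery.

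First I would substitute $\gamma = \lambda$ into the identity of Proposition \ref{addition_formula}, obtaining
$$\int_{0}^{1}\hat{f}_{\lambda \oplus \lambda}(x)\,dx = \frac{\ell(\lambda)}{\ell(\lambda)+\ell(\lambda)} \int_{0}^{1} \hat{f}_\lambda(x)\,dx + \frac{\ell(\lambda)}{\ell(\lambda)+\ell(\lambda)} \int_{0}^{1} \hat{f}_\lambda(x)\,dx.$$
Next I would simplify each weight: the denominator equals $2\ell(\lambda)$, so each coefficient reduces to $\frac{\ell(\lambda)}{2\ell(\lambda)} = \frac{1}{2}$. Finally, summing the two resulting copies of $\frac{1}{2}\int_0^1 \hat{f}_\lambda(x)\,dx$ recovers the full integral, which is exactly the claimed identity.

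There is essentially no obstacle here: the whole argument is a single substitution followed by arithmetic simplification of the convex-combination weights, and the only hypothesis to note is $\ell(\lambda) \neq 0$, valid for any nonempty partition. Conceptually, the reason the result is expected is that $\lambda \oplus \lambda$ merely doubles every multiplicity $m_i$, and in the normalized polynomial $\hat{f}_{\lambda \oplus \lambda}$ this factor of two cancels against the doubled length in the normalization; hence $\hat{f}_{\lambda \oplus \lambda} = \hat{f}_\lambda$ pointwise, so their integrals agree a fortiori. The substitution route via Proposition \ref{addition_formula} is simply the cleanest way to record this cancellation.
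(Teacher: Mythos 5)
Your proof is correct and is essentially identical to the paper's: both substitute $\gamma = \lambda$ into Proposition \ref{addition_formula} and simplify the weights to $\tfrac{1}{2}$. Your closing observation that $\hat{f}_{\lambda\oplus\lambda} = \hat{f}_\lambda$ pointwise is a nice (and valid) additional remark, but the core argument matches the paper's.
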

	
	\begin{proof}
		By Proposition \ref{addition_formula}, we have that
		\begin{align*}
		\int_{0}^{1} \hat{f}_{\lambda \oplus \lambda}(x)\,dx & = \frac{\ell(\lambda)}{\ell(\lambda)+\ell(\lambda)} \int_{0}^{1} \hat{f}_\lambda(x)\,dx + \frac{\ell(\lambda)}{\ell(\lambda)+\ell(\lambda)} \int_{0}^{1} \hat{f}_\lambda(x)\,dx \\
		& = \frac{1}{2} \int_{0}^{1} \hat{f}_\lambda(x)\,dx + \frac{1}{2} \int_{0}^{1} \hat{f}_\lambda(x)\,dx \\
		& = \int_{0}^{1}\hat{f}_\lambda(x)\,dx .
		\end{align*}
	\end{proof}
	
	\begin{corollary}\label{cor2}
		The sum of two partitions of the same length has integral equal to the average of the individual integrals. Explicitly, if $\ell(\lambda) = \ell(\gamma)$, then we have that
		\begin{equation*}
		\int_{0}^{1}\hat{f}_{\lambda \oplus \gamma}(x)\,dx = \frac{1}{2} \left(\int_{0}^{1}\hat{f}_\lambda(x)\,dx + \int_{0}^{1}\hat{f}_\gamma(x)\,dx  \right) .
		\end{equation*}
	\end{corollary}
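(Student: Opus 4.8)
The plan is to derive this corollary as an immediate specialization of Proposition \ref{addition_formula}. That proposition already expresses the integral of $\hat{f}_{\lambda \oplus \gamma}$ as a weighted average of the two individual integrals, with weights $\frac{\ell(\lambda)}{\ell(\lambda)+\ell(\gamma)}$ and $\frac{\ell(\gamma)}{\ell(\lambda)+\ell(\gamma)}$. The entire content of the corollary is the observation that these weights collapse to $\frac{1}{2}$ each precisely when the two lengths agree.

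Concretely, I would begin by invoking Proposition \ref{addition_formula} verbatim, writing
\begin{equation*}
\int_{0}^{1}\hat{f}_{\lambda \oplus \gamma}(x)\,dx = \frac{\ell(\lambda)}{\ell(\lambda)+\ell(\gamma)} \int_{0}^{1} \hat{f}_\lambda(x)\,dx + \frac{\ell(\gamma)}{\ell(\lambda)+\ell(\gamma)} \int_{0}^{1} \hat{f}_\gamma(x)\,dx .
\end{equation*}
Then I would substitute the hypothesis $\ell(\lambda) = \ell(\gamma)$ into each coefficient, so that $\frac{\ell(\lambda)}{\ell(\lambda)+\ell(\gamma)} = \frac{\ell(\lambda)}{2\ell(\lambda)} = \frac{1}{2}$ and likewise for the second weight. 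Factoring out the common $\frac{1}{2}$ yields exactly the claimed identity.

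Since the heavy lifting is already contained in Proposition \ref{addition_formula}, there is essentially no obstacle to overcome; the only thing to verify is that $\ell(\lambda)+\ell(\gamma) \neq 0$, which is automatic because any partition appearing in a normalized partition polynomial has at least one part and hence positive length. Thus the substitution is well defined and the argument is a one-line computation rather than a genuine difficulty.
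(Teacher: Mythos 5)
Your proposal is correct and matches the paper's intended argument: the paper states that the proof of Corollary \ref{cor2} is similar to that of Corollary \ref{cor1}, which is precisely this specialization of Proposition \ref{addition_formula} with the weights collapsing to $\frac{1}{2}$ when $\ell(\lambda)=\ell(\gamma)$. Nothing is missing.
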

	
	The proof of Corollary \ref{cor2} is similar to the proof of Corollary \ref{cor1}.  These results will be used to prove Theorem \ref{main_result} in Section \ref{sec_integralproofs} through a binary search type argument.

\section{Proof of Theorem \ref{main_result}}\label{sec_integralproofs}

In order to prove Theorem \ref{main_result}, it suffices to fix an arbitrary $c \in [0, 1/2]$ and to show that there exists a sequence of partitions $(\delta_s)$ such that $$\lim_{s \to \infty} \int_{0}^{1} \hat{f}_{\delta_s}(x)\,dx = c.$$  We first provide constructive proofs of the $c=0$ and $c=1/2$ cases of Theorem \ref{main_result}.
	
	\begin{lemma}
		\label{lemma_edge_50}
		There exists a sequence of non-trivial partitions $(\alpha_s)$ such that
		\begin{equation*}
		\lim\limits_{s \to \infty} \int_{0}^{1}\hat{f}_{\alpha_s}(x)\,dx = 0 .
		\end{equation*}
	\end{lemma}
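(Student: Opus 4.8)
The plan is to apply the explicit integral formula from Theorem \ref{integral_formula}, which expresses $\int_0^1 \hat{f}_\lambda(x)\,dx$ as the weighted average $\frac{1}{\ell(\lambda)}\sum_{i=1}^k \frac{m_i}{i+1}$ of the numbers $\frac{1}{i+1}$, with weights given by the multiplicities $m_i$. Since $\frac{1}{i+1}$ decreases to $0$ as the part size $i$ grows, the strategy is to choose partitions that concentrate all of their weight on a single large part, thereby forcing this weighted average to be small.

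Concretely, I would take $\alpha_s = \langle s^1\rangle$ to be the partition consisting of a single part of size $s$, for $s \geq 2$. Each such $\alpha_s$ is non-trivial, since its unique part exceeds $1$. Here $\ell(\alpha_s) = 1$ and the only nonzero multiplicity is $m_s = 1$, so Theorem \ref{integral_formula} gives $\int_0^1 \hat{f}_{\alpha_s}(x)\,dx = \frac{1}{s+1}$. Equivalently, one can observe directly that $\hat{f}_{\alpha_s}(x) = x^s$, whose integral over $[0,1]$ is $\frac{1}{s+1}$.

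Taking the limit as $s \to \infty$ then yields $\lim_{s\to\infty} \frac{1}{s+1} = 0$, which establishes the lemma. I expect essentially no obstacle here: the entire content reduces to the single observation that placing all multiplicity on one part of size $s$ makes the integral exactly $\frac{1}{s+1}$, so the sequence is forced to $0$ at the explicit rate $O(1/s)$. The only point requiring any attention is the non-triviality hypothesis, which merely restricts us to $s \geq 2$.
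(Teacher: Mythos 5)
Your proof is correct, and it follows the same basic strategy as the paper: exhibit an explicit sequence of non-trivial partitions concentrating all weight on a single large part and compute the integral directly. Your witness $\alpha_s=\langle s^1\rangle$ is in fact simpler than the paper's choice $\alpha_s=\langle 1^1,2^0,\dots,s^{s-1}\rangle$, which yields $\int_0^1\hat{f}_{\alpha_s}(x)\,dx=\frac{1}{s}\left(\frac{1}{2}+\frac{s-1}{s+1}\right)\to 0$. The only thing you lose with the one-part partition is relevant later rather than here: in the proof of Theorem \ref{main_result} the authors reuse these same $\alpha_s$ together with $\beta_s=\langle 1^{s-1},2^0,\dots,s^1\rangle$ as inputs to Theorem \ref{main_lemma}, which requires $\ell(\alpha_s)=\ell(\beta_s)$; the paper's $\alpha_s$ is engineered to have length exactly $s$ (the extra part of size $1$ is there for this purpose), whereas your $\langle s^1\rangle$ has length $1$ and could not be paired with $\beta_s$ there without modification. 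For the lemma as stated, your argument is complete and the non-triviality point ($s\geq 2$) is handled correctly.
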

	
	\begin{proof}
		Define the partition $\alpha_s$ by $\langle 1^1,2^0,3^0,\dots,s^{s-1} \rangle$. Then we observe that $\hat{f}_{\alpha_s}(x) = \frac{1}{s} (x+(s-1)x^s )$, and
		\begin{equation*}
		\lim\limits_{s \to \infty} \int_{0}^{1}\hat{f}_{\alpha_s}(x)\,dx = \lim\limits_{s \to \infty} \frac{1}{s}\left(\frac{1}{2}+\frac{s-1}{s+1} \right) = 0 .
		\end{equation*}
	\end{proof}
A similar result holds for non-trivial partitions whose integral approaches $1/2$. 
	
	\begin{lemma}
		\label{lemma_edge_0}
		There exists a sequence of non-trivial partitions $(\beta_s)$ such that
		\begin{equation*}
		\lim\limits_{s \to \infty} \int_{0}^{1}\hat{f}_{\beta_s}(x)\,dx = 1/2 .
		\end{equation*}
	\end{lemma}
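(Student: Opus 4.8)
The plan is to mirror the construction used in Lemma \ref{lemma_edge_50}, but to place the mass of the partition at the opposite extreme. By Proposition \ref{prop2.3}, the integral equals $1/2$ precisely when $\hat{f}_\lambda(x) = x$, which happens exactly when $\lambda$ consists only of parts of size $1$. Such partitions are trivial, so they are excluded; the idea is therefore to take a sequence of \emph{non-trivial} partitions that are overwhelmingly made of $1$'s, with a single additional part inserted solely to guarantee non-triviality. As the number of $1$'s grows, the contribution of that one extra part to the normalized integral should wash out, pushing the value toward $1/2$.

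Concretely, I would set $\beta_s = \langle 1^{s-1}, 2^0, \dots, s^1 \rangle$, the partition consisting of $s-1$ parts equal to $1$ together with a single part equal to $s$; each $\beta_s$ is non-trivial because it contains a part of size $s \geq 2$. Since $\ell(\beta_s) = s$, Theorem \ref{integral_formula} gives
$$\int_0^1 \hat{f}_{\beta_s}(x)\,dx = \frac{1}{s}\left(\frac{s-1}{2} + \frac{1}{s+1}\right).$$
Letting $s \to \infty$, the first term satisfies $\frac{s-1}{2s} \to 1/2$ while the second term is $O(1/s^2)$, so the limit equals $1/2$, as required.

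This computation is routine, so there is no serious obstacle; the only point requiring care is to choose the impurity part so that it both preserves non-triviality and contributes negligibly to the normalized integral. A part of fixed size would work equally well: taking $\beta_s = \langle 1^{s-1}, 2^1 \rangle$ yields $\frac{1}{s}\left(\frac{s-1}{2} + \frac{1}{3}\right) = \frac{3s-1}{6s} \to \frac{1}{2}$. Using a growing part of size $s$, however, makes the construction a direct mirror image of the sequence $\alpha_s$ from Lemma \ref{lemma_edge_50}, emphasizing that both edge cases $c = 0$ and $c = 1/2$ arise from concentrating the multiplicities while letting a single remaining part sit at the opposite end of the allowed range.
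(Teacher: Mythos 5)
Your proposal is correct and uses exactly the same construction as the paper: $\beta_s = \langle 1^{s-1},2^0,\dots,s^1\rangle$ with the integral $\frac{1}{s}\left(\frac{s-1}{2}+\frac{1}{s+1}\right)\to 1/2$. The alternative $\langle 1^{s-1},2^1\rangle$ you mention also works, but the main argument matches the paper's proof verbatim.
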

	
	\begin{proof}
		Define the partition $\beta_s$ by $\langle 1^{s-1},2^0,3^0,\dots,(s-1)^0,s^1 \rangle$. Then we have that $\hat{f}_{\beta_s}(x) = \frac{1}{s}((s-1)x+x^s)$, and
		\begin{equation*}
		\lim\limits_{s \to \infty} \int_{0}^{1}\hat{f}_{\beta_s}(x)\,dx = \lim\limits_{s \to \infty} \frac{1}{s}\left(\frac{s-1}{2}+\frac{1}{s+1}\right) = 1/2 .
		\end{equation*}
	\end{proof}
	The following theorem, combined with Lemmas \ref{lemma_edge_50} and \ref{lemma_edge_0}, will give Theorem \ref{main_result} immediately.
	
	\begin{theorem}
		\label{main_lemma}
		Suppose that $\alpha$ and $\beta$ are partitions such that $$\int_{0}^{1}\hat{f}_\alpha(x)\,dx = a\hspace{.5cm}\text{and}\hspace{.5cm}\int_{0}^{1} \hat{f}_\beta(x)\,dx = b.$$ If $0 < a < b < 1/2$ and $\ell(\alpha) = \ell(\beta)$, then for every $c \in (a, b)$, there exists a sequence $(\delta_s)$ of partitions such that
		
		\begin{equation*}
		\lim\limits_{s \to \infty} \int_{0}^{1} \hat{f}_{\delta_s}(x)\,dx = c .
		\end{equation*}
	\end{theorem}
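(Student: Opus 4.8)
The plan is to run a bisection (binary search) on $(a,b)$, using the operation $\oplus$ to manufacture partitions whose integrals are successive midpoints, while carefully maintaining equal lengths so that Corollary \ref{cor2} always applies. First I would set up an invariant: at each stage $s$ I keep two partitions $L_s$ and $R_s$ with $\ell(L_s)=\ell(R_s)$ whose integrals $a_s:=\int_0^1\hat{f}_{L_s}(x)\,dx$ and $b_s:=\int_0^1\hat{f}_{R_s}(x)\,dx$ satisfy $a_s\le c\le b_s$. I initialize with $L_0=\alpha$ and $R_0=\beta$, so that $a_0=a$, $b_0=b$, and $\ell(\alpha)=\ell(\beta)$ holds by hypothesis.

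For the inductive step, I would form the midpoint partition $M_s:=L_s\oplus R_s$. Since $\ell(L_s)=\ell(R_s)$, Corollary \ref{cor2} gives $\int_0^1\hat{f}_{M_s}(x)\,dx=\tfrac12(a_s+b_s)=:m_s$, and $M_s$ has length $2\ell(L_s)$. Comparing $c$ to $m_s$ tells me which half of $[a_s,b_s]$ contains $c$, and I discard the other half: if $c\le m_s$ I set $L_{s+1}:=L_s\oplus L_s$ and $R_{s+1}:=M_s$, while if $c>m_s$ I set $L_{s+1}:=M_s$ and $R_{s+1}:=R_s\oplus R_s$. In either case the new bracket $[a_{s+1},b_{s+1}]$ still contains $c$ and has half the width of the old one.

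The crucial point, and the one step that requires genuine care, is keeping the two bracketing partitions of equal length at every stage, since it is Corollary \ref{cor2} (rather than the general weighted average of Proposition \ref{addition_formula}) that produces the clean midpoint. This is exactly where Corollary \ref{cor1} is needed: doubling the surviving endpoint via $L_s\oplus L_s$ or $R_s\oplus R_s$ leaves its integral unchanged while raising its length to $2\ell(L_s)$, matching the length of $M_s$. Hence $\ell(L_{s+1})=\ell(R_{s+1})=2\ell(L_s)$, the invariant persists, and inductively $\ell(L_s)=2^s\ell(\alpha)$.

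Finally I would check convergence. By construction $a_s\le c\le b_s$ for all $s$, and each step halves the bracket, so $b_s-a_s=(b-a)/2^s\to 0$. Taking $\delta_s:=L_s$ (any of $L_s$, $R_s$, or $M_s$ works equally well), I obtain $\bigl|\int_0^1\hat{f}_{\delta_s}(x)\,dx-c\bigr|=|a_s-c|\le b_s-a_s\to 0$, whence $\lim_{s\to\infty}\int_0^1\hat{f}_{\delta_s}(x)\,dx=c$, as required. There is no serious analytic obstacle here; the only thing to watch is the bookkeeping of lengths, which the doubling trick of Corollary \ref{cor1} resolves.
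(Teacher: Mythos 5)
Your proposal is correct and is essentially the paper's own argument: the same binary search on $(a,b)$ via $\oplus$, using Corollary \ref{cor1} to double the surviving endpoint so that lengths stay equal and Corollary \ref{cor2} to realize the midpoint integral. The only cosmetic difference is that you output $L_s$ while the paper outputs the midpoint partition $\alpha_s\oplus\beta_s$; both converge to $c$ since the bracket width is $(b-a)/2^s$.
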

	
	\begin{proof}
		Let $\epsilon > 0$ be given. We construct $(\delta_s)$ recursively by first constructing two sequences $(\beta_s)$ and $(\alpha_s)$ of partitions such that each $\beta_s$ approximates $c$ from above and each $\alpha_s$ approximates $c$ from below. Define $\beta_1 = \beta$, $\alpha_1 = \alpha$, and $\delta_1 = \alpha_1 \oplus \beta_1$. The sequence $(\delta_s)$ will be defined by $\delta_s = \alpha_s \oplus \beta_s$. 
		
		Let $r \geq 1$. If $c = {\displaystyle\int_{0}^{1}} \hat{f}_{\delta_r}(x)\,dx$, then define $\delta_s = \delta_r$. Otherwise, we have one of the following two cases.\\
		\noindent\textbf{Case 1:} $c < {\displaystyle\int_{0}^{1}} \hat{f}_{\delta_r}\,dx$. Define $\beta_{r+1} = \delta_r$ and $\alpha_{r+1} = \alpha_r \oplus \alpha_r$. Since $\ell(\beta_{r+1}) = 2^{r+1}\ell(\alpha) = \ell(\alpha_{r+1})$, we see that
		\begin{equation*}
		\int_{0}^{1}\hat{f}_{\delta_{r+1}}(x)\,dx = \frac{1}{2} \left(\int_{0}^{1}\hat{f}_{\beta_{r+1}}(x)\,dx + \int_{0}^{1}\hat{f}_{\alpha_{r+1}}(x)\,dx \right).
		\end{equation*}
		
		\noindent\textbf{Case 2:} $c > {\displaystyle\int_{0}^{1}} \hat{f}_{\delta_r}\,dx$. Define $\beta_{r+1} = \beta_r \oplus \beta_r$ and $\alpha_{r+1} = \delta_r$. Since $\ell(\beta_{r+1}) = 2^{r+1}\ell(\alpha) = \ell(\alpha_{r+1})$, we see that
		\begin{equation*}
		\int_{0}^{1}\hat{f}_{\delta_{r+1}}(x)\,dx =   \frac{1}{2}\left(\int_{0}^{1}\hat{f}_{\beta_{r+1}}(x)\,dx + \int_{0}^{1}\hat{f}_{\alpha_{r+1}}(x)\,dx \right) .
		\end{equation*}
		Note that by definition,
		\begin{equation*}
		\int_{0}^{1}\hat{f}_{\delta_s}(x)\,dx = \frac{1}{2} \left(\int_{0}^{1}\hat{f}_{\beta_s}(x)\,dx + \int_{0}^{1}\hat{f}_{\alpha_s}(x)\,dx \right) .
		\end{equation*}
		Since the integral for $\alpha_s$ is less than the integral for $\beta_s$, we obtain the following inequality:
		\begin{equation*}
		\int_{0}^{1}\hat{f}_{\alpha_s}(x)\,dx < \int_{0}^{1}\hat{f}_{\delta_s}(x)\,dx < \int_{0}^{1}\hat{f}_{\beta_s}(x)\,dx.
		\end{equation*}
		
		Now let $N > \log_2\left(\frac{b-a}{\epsilon} \right)$. The previous inequality, along with the fact that $c$ is an upper bound for the integral for $\alpha_s$, yields the following for all $s \geq N$:
		\begin{align*}
		\left| \int_{0}^{1}\hat{f}_{\delta_s}(x)\,dx - c \right| &
		< \int_{0}^{1}\hat{f}_{\beta_s}(x)\,dx - \int_{0}^{1}\hat{f}_{\alpha_s}(x)\,dx \leq \frac{b-a}{2^s} \leq \frac{b-a}{2^N} < \epsilon.
		\end{align*}
		Therefore, $\lim\limits_{s \to \infty} {\displaystyle\int_{0}^{1}}\hat{f}_{\delta_s}(x)\,dx = c$ as desired. 
	\end{proof}
	
	We are now ready to prove Theorem \ref{main_result}.
	
	\begin{proof}[Proof of Theorem \ref{main_result}]
		The edge cases $c = 0$ and $c = 1/2$ follow from Lemma \ref{lemma_edge_50} and Lemma \ref{lemma_edge_0}, respectively. Thus, it suffices to show that the result holds for $c \in (0, 1/2)$. Define the partitions
		\begin{align*}
		\alpha_s = \langle 1^1,2^0,3^0,\dots,(s-1)^0,s^{s-1} \rangle\hspace{.5cm}\text{and}\hspace{.5cm}\beta_s = \langle 1^{s-1},2^0,3^0,\dots,(s-1)^0,s^1 \rangle.
		\end{align*}
		Then we have that
		\begin{align*}
		\int_{0}^{1}\hat{f}_{\alpha_s}(x)\,dx & = \frac{1}{s}\left(\frac{1}{2}+\frac{s-1}{s+1} \right), \\
		\int_{0}^{1}\hat{f}_{\beta_s}(x)\,dx & = \frac{1}{s}\left(\frac{s-1}{2}+\frac{1}{s+1} \right).
		\end{align*}
		Since $\ell(\alpha_s) = \ell(\beta_s)$ for all $s \in \mathbb{N}$, Theorem \ref{main_lemma} shows that the result holds for all $c$ with
		\begin{equation*}
		\int_{0}^{1}\hat{f}_{\alpha_s}(x)\,dx < c < \int_{0}^{1}\hat{f}_{\beta_s}(x)\,dx .
		\end{equation*}
		For increasing $s$, these are nested intervals. The union of all such intervals is the set $(0, 1/2)$. This proves Theorem \ref{main_result}.
	\end{proof}

\section{Proofs of Theorems \ref{avg1} and \ref{avg2}}\label{sec_averages}

\begin{proof}[Proof of Theorem \ref{avg1}]
    Recall that one may compute $\mathrm{Avg}(n, \ell)$ by evaluating the integral of the sum of all partitions of $n$ with $\ell$ parts.  It is straightforward to enumerate all partitions of $n$ into one part and all partitions of $n$ into two parts. Adding all of the parts together for $\ell=1$ and $\ell=2$ gives the partition $\lambda_1 = \langle 1^0,2^0,\dots,n^1 \rangle$ for all $n$, and the partitions $$\lambda_2^\text{odd} = \langle 1^1,2^1,\dots,(n-1)^1,n^0 \rangle$$ if $n$ is odd or $$\lambda_2^\text{even}=\langle1^1,2^1,\dots,((n/2)-1)^1,(n/2)^2,((n/2)+1)^1,\dots,(n-1)^1,n^0\rangle$$ if $n$ is even.  Let $H_n$ denote the $n$th Harmonic number.  Then the averages are $$\mathrm{Avg}(n, 1) = \int_{0}^{1} \hat{f}_{\lambda_1}(x)\,dx = \frac{1}{n+1}$$ for all $n$; and 
    \begin{align*}
        \mathrm{Avg}(n, 2) & = \int_{0}^{1} \hat{f}_{\lambda_2^\text{odd}}(x)\,dx = \frac{1}{2 \cdot \left\lfloor\frac{n}{2}\right\rfloor}\left(\frac{1}{2} + \frac{1}{3} + \cdots + \frac{1}{n} \right) = \frac{1}{2\cdot \left\lfloor\frac{n}{2}\right\rfloor} (H_n - 1 )
    \end{align*}
    if $n$ is odd, or
    \begin{align*}
        \mathrm{Avg}(n, 2) & = \int_{0}^{1} \hat{f}_{\lambda_2^\text{even}}(x)\,dx = \frac{1}{2 \cdot \left\lfloor\frac{n}{2}\right\rfloor}\left(\frac{1}{2} + \frac{1}{3} + \cdots + \frac{1}{n} + \frac{1}{n/2}\right) = \frac{1}{2\cdot \left\lfloor\frac{n}{2}\right\rfloor} \left(H_n - 1 +\frac{2}{n}\right)
    \end{align*}
    if $n$ is even.  Noting that $\left\lfloor\frac{n}{2}\right\rfloor \leq \frac{n}{2}$, we see that
    \begin{align*}
        \mathrm{Avg}(n, 2) & \geq \frac{1}{2 \cdot \frac{n}{2}} (H_n - 1 )\geq \frac{1}{n+1} (H_n - 1 )\geq \mathrm{Avg}(n, 1)
    \end{align*}
    for all $n\geq4$.  One can check by hand that the result also holds for $n<4$.  Thus, we have that $\mathrm{Avg}(n, 1) \leq \mathrm{Avg}(n, 2)$ for all $n \in \mathbb{N}$.
\end{proof}

Note in the above proof that we have the asymptotic expression $\mathrm{Avg}(n, 2)\sim\ln(n)/n$ as $n\rightarrow\infty$, since the $n$th harmonic number is asymptotic to $\ln(n)$.
    
    \begin{proof}[Proof of Theorem \ref{avg2}]
    We will show that $\mathrm{Avg}(n, 3)$ is asymptotically bounded below by $2\ln(n)/n$. Let $n \in \mathbb{N}$, and let $\lambda_3$ be the sum of all partitions of $n$ into $3$ parts. To get a lower bound on the multiplicity of each part in $\lambda_3$, let $1 \leq i \leq n-2$, and note that the following sums give partitions of $n$ into $3$ parts:
    \begin{align*}
        & 1+i+(n-(i+1)) = 2+i+(n-(i+2)) = \cdots = (i-1)+i+(n-(2i-1)), \\
        & i+i+(n-2i) = i+(i+1)+(n-(2i+1)) =  \cdots = i+\left\lfloor\frac{n-i}{2}\right\rfloor+\left\lceil\frac{n-i}{2}\right\rceil.
    \end{align*}
    Counting the number of times $i$ appears, there are at least $i-1$ instances in the first row and at least $\left\lfloor\frac{n-i}{2}\right\rfloor-i+1$ in the second row. This gives a lower bound of
    \begin{align*}
        i-1+\left\lfloor\frac{n-i}{2}\right\rfloor-i+1 & = \left\lfloor\frac{n-i}{2}\right\rfloor \geq \frac{n-i}{2}-1
    \end{align*}
    parts of size $i$ among all partitions of $n$ into 3 parts.  Define the function $g : [0, n-2] \rightarrow \mathbb{R}$ by $g(x) = \frac{n}{2}-1 - \frac{n/2}{n-2} x$. We use this function to obtain a lower bound for the integral of $f_{\lambda_3}$, as it always lies below the bound achieved above.  We see that
    \begin{align*}
        \mathrm{Avg}(n, 3) & = \int_{0}^{1} \hat{f}_{\lambda_3}(x)\,dx \\
        & \geq \frac{1}{\ell(\lambda_3)} \int_{0}^{n-2} \frac{g(x)}{x+1} \,dx \\
        & = \frac{1}{\ell(\lambda_3)} \cdot \left( \left( \frac{n}{2} - 1\right)\ln(n-1) - ( (n-2)-\ln(n-1)) \cdot \frac{n/2}{n-2} \right) \\
        & \sim \frac{1}{\ell(\lambda_3)} \cdot \frac{n}{2} \ln(n)
    \end{align*}
    as $n\rightarrow\infty$.  Also, as $n\rightarrow\infty$ we have that $\ell(\lambda_3) \sim 3\cdot\frac{n^2}{12}$, by a known asymptotic formula for the number of partitions of size $n$ into 3 parts (see \cite{KK}). This yields
    \begin{align*}
        \mathrm{Avg}(n, 3) & \geq \frac{1}{\ell(\lambda_3)} \cdot \frac{n}{2}\ln(n)\sim \frac{1}{3\cdot\frac{n^2}{12}} \cdot \frac{n}{2}\ln(n)= \frac{2\ln(n)}{n}\sim 2 \mathrm{Avg}(n, 2)
    \end{align*}
    as $n\rightarrow\infty$.  Therefore, $\mathrm{Avg}(n,3)\geq\mathrm{Avg}(n,2)$ for sufficiently large $n$, as desired.
\end{proof}

\section{Acknowledgments}
The authors would like to thank Matthew Just and Robert Schneider for many enlightening discussions regarding partition statistics, Ken Ono for helpful comments on the content of this paper, and Dario Mathi\"a for investigating a preliminary version of Question \ref{question2} and sharing insightful observations and calculations.  The authors would also like to thank the referees for their suggestions which greatly improved this paper.  The first author is supported by an AMS-Simons Travel Grant and an internal grant from the University of Texas at Tyler.

\noindent 
2020 \emph{Mathematics Subject Classification}: Primary 11P82. 
Secondary 05A17, 11C08.

\noindent 
\emph{Key words and phrases}: Partition, polynomial. 

\noindent 
(Concerned with sequences
\seqnum{A000041},
\seqnum{A008277}, and
\seqnum{A305078}.)


\begin{thebibliography}{99}

\bibitem{A}
G. E. Andrews,
\newblock \emph{The Theory of Partitions},
\newblock Cambridge University Press, 1984.

\bibitem{AE}
G. E. Andrews and K. Eriksson,
\newblock \emph{Integer Partitions},
\newblock Cambridge University Press, 2004.

\bibitem{DJS}
M. L. Dawsey, M. Just, and R. Schneider,
\newblock A ``supernormal" partition statistic, preprint, 2021.
\newblock Available at \url{https://arxiv.org/abs/2107.14284}.

\bibitem{HR}
G. H. Hardy and S. Ramanujan,
\newblock Asymptotic formul{\ae} in combinatory analysis,
\newblock \emph{Proc. London Math. Soc.} \textbf{17} (1918), 75--115.

\bibitem{HW}
G. H. Hardy and E. Wright,
\newblock \emph{An Introduction to the Theory of Numbers}, 6th edition,
\newblock Oxford University Press, 2008.

\bibitem{J}
M. Just,
\newblock Personal communication with the first author,
\newblock September 18, 2020.

\bibitem{KK}
C. Knessl and J. B. Keller,
\newblock Partition asymptotics from recursion equations,
\newblock \emph{SIAM Journal on Applied Mathematics} \textbf{50} (1990), 323--338.

\bibitem{OEIS}
OEIS Foundation Inc.,
\newblock The On-Line Encyclopedia of Integer Sequences, 2020.
\newblock Available at \url{https://oeis.org}.

\bibitem{Rademacher}
H. Rademacher,
\newblock A convergent series for the partition function $p(n)$,
\newblock \emph{Proc. Natl. Acad. Sci. U. S. A.} \textbf{23} (1937), 78--84.

\bibitem{R1}
S. Ramanujan,
\newblock On certain arithmetical functions,
\newblock \emph{Trans. Cambridge Philos. Soc.} \textbf{22} (1916), 159--184.

\bibitem{R3}
S. Ramanujan,
\newblock Congruence properties of partitions,
\newblock \emph{Math. Z.} \textbf{9} (1921), 147--153.

\bibitem{S}
R. Schneider,
\newblock Eulerian series, zeta functions and the arithmetic of partitions,
\newblock Ph.D. thesis, 2018.  Available at \url{https://arxiv.org/abs/2008.04243}.

\bibitem{Stanley}
R. P. Stanley,
\newblock \emph{Enumerative Combinatorics, Vol. 1}, 2nd edition, Cambridge Studies in Advanced Mathematics,
\newblock Cambridge: Cambridge University Press, 2011.

\bibitem{Z}
S. Zemel,
\newblock Moments of partitions and derivatives of higher order,
\newblock \emph{J. Algebraic Combin.} \textbf{54} (2021), 425--439.

\end{thebibliography}
\end{document}